\newlength{\tabwidth}
\newlength{\tabheight}
\newlength{\tabrule}
\newlength{\tabwidthx}
\newlength{\tabheightx}
\def\gentabbox#1#2#3#4{\vbox to \tabheight{\setlength{\tabrule}{#3}%
  \setlength{\tabwidthx}{#1\tabwidth}\addtolength{\tabwidthx}{\tabrule}%

\setlength{\tabheightx}{#2\tabheight}\addtolength{\tabheightx}{-\tabheight}%
  \hbox to #1\tabwidth{%
    \hspace{-0.5\tabrule}\rule{\tabrule}{#2\tabheight}\hspace{-\tabrule}%
    \vbox to #2\tabheight{\hsize=\tabwidthx%
      \vspace{-0.5\tabrule}\hrule width\tabwidthx height\tabrule%
      \vspace{-0.5\tabrule}\vfil%
      \hbox to \tabwidthx{\hss#4\hss}%
        \vfil\vspace{-0.5\tabrule}%
      \hrule width\tabwidthx height\tabrule\vspace{-0.5\tabrule}}%
    \hspace{-\tabrule}\rule{\tabrule}{#2\tabheight}\hspace{-0.5\tabrule}}%
  \vspace{-\tabheightx}}}
\def\genblankbox#1#2{\vbox to \tabheight{\vfil\hbox to
#1\tabwidth{\hfil}}}
\newcommand{\field}{\mathbb}
\newcommand{\liealgebra}{\mathfrak}
\newcommand{\la}{\liealgebra}
\newcommand{\C}{{\field C}}
\newcommand{\R}{{\field R}}
\newcommand{\QU}{{\mathbb H}} %quaternions
\renewcommand{\b}{\liealgebra b}
\newcommand{\n}{{\la n}}
\newcommand{\ga}{\alpha}
\newcommand{\wt}{\widetilde}
\newtheorem{prop}{Proposition}[section]
\newtheorem{theorem}[prop]{Theorem}
\theoremstyle{definition}
\newtheorem{remark}[prop]{Remark}
\newtheorem{definition}[prop]{Definition}
\newcommand{\frb}{\mathfrak{b}}
\newcommand{\frg}{\mathfrak{g}}
\newcommand{\frk}{\mathfrak{k}}
\newcommand{\frs}{\mathfrak{s}}
\newcommand{\frt}{\mathfrak{t}}
\newcommand{\caL}{\mathcal{L}}
\newcommand{\caO}{\mathcal{O}}
\newcommand{\caT}{\mathcal{T}}
\newcommand{\caV}{\mathcal{V}}
\begin{document}
\title[$K$-orbit closures on $G/B$ as degeneracy loci]{$K$-orbit closures on $G/B$ as universal degeneracy loci for flagged vector bundles with symmetric or skew-symmetric bilinear form}

\author{Benjamin J. Wyser}
\date{\today}

%\thanks{}
%\address{}
%\email{}

\begin{abstract}
We use equivariant localization and divided difference operators to determine formulas for the torus-equivariant fundamental cohomology classes of $K$-orbit closures on the flag variety $G/B$, where $G = GL(n,\C)$, and where $K$ is one of the symmetric subgroups $O(n,\C)$ or $Sp(n,\C)$.  We realize these orbit closures as universal degeneracy loci for a vector bundle over a variety equipped with a single flag of subbundles and a nondegenerate symmetric or skew-symmetric bilinear form taking values in the trivial bundle.  We describe how our equivariant formulas can be interpreted as giving formulas for the classes of such loci in terms of the Chern classes of the various bundles.
\end{abstract}

\maketitle

Suppose that $V \rightarrow X$ is a rank $n$ vector bundle over a smooth complex variety $X$, and that $V$ is equipped with a symmetric or skew-symmetric bilinear form $\gamma$ taking values in the trivial bundle, along with a complete flag of subbundles $F_{\bullet}$.  Let $b \in S_n$ be an involution, assumed fixed point-free if $n$ is even and $\gamma$ is skew-symmetric.  Consider the degeneracy locus 
\begin{equation}\label{eqn:main-result-eqn}
	D_b = \{ x \in X \mid \text{rank}(\gamma|_{F_i(x) \times F_j(x)}) \leq r_b(i,j) \ \forall i,j\},
\end{equation}

where $r_b(i,j)$ is a non-negative integer depending on $b$, $i$, and $j$.  The main result of this paper is a recursive procedure by which one may obtain a formula for the fundamental class $[D_b] \in H^*(X)$ in the first Chern classes $c_1(F_i/F_{i-1})$ for $i=1,\hdots,n$, under certain genericity assumptions.  If $n$ is even, $\gamma$ is symmetric, and $b$ is fixed point-free, the locus $D_b$ has two irreducible components; we also describe how to obtain formulas for the fundamental classes of these components.  Such formulas involve the Chern classes of the subquotients of $F_{\bullet}$ together with an Euler class of $V$.

Although this is a nice, compact description of our results, this project was not initially motivated by a desire to find formulas for such degeneracy loci.  Rather, the motivation was to answer the following questions, in order: 
\begin{enumerate}
	\item Can torus-equivariant cohomology classes of certain orbit closures on the flag variety $G/B$ (analogous to Schubert varieties) be computed explicitly using localization techniques?
	\item Are such orbit closures universal cases of certain types of degeneracy loci, as Schubert varieties are?
	\item If so, what types of degeneracy loci are parametrized by such orbit closures?  Can a translation be made between a formula for the equivariant cohomology class of such an orbit closure and the fundamental class of such a degeneracy locus?
\end{enumerate}

In the cases considered in this paper, the answers to (1)-(2) turn out to be ``yes", and the answer to (3) turns out to be a typical locus $D_b$ as defined in (\ref{eqn:main-result-eqn}) above, with the translation between the two settings being very straightforward.  Consideration of questions (1)-(3) above was motivated by earlier work of W. Fulton (\cite{Fulton-92,Fulton-96_1,Fulton-96_2}) on Schubert loci in flag bundles, their role as universal degeneracy loci of maps of flagged vector bundles, and by connections between this work and the torus-equivariant cohomology of the flag variety, $H_T^*(G/B)$, discovered by W. Graham (\cite{Graham-97}).  We briefly describe this earlier work.  Suppose $V$ is a vector bundle over a variety $X$, and suppose that $E_{\bullet}$ and $F_{\bullet}$ are two complete flags of subbundles of $V$.  Let $w \in S_n$ be given, and consider the locus

\[ \Omega_w = \{ x \in X \ \vert \ \text{rank}(E_i(x) \cap F_j(x)) \geq r_w(i,j) \text{ for all } i,j\}, \]
where $r_w(i,j)$ is a non-negative integer depending on $w$, $i$, and $j$.  Fulton considered the problem of finding a formula for the fundamental class $[\Omega_w] \in H^*(X)$ in terms of the Chern classes of the bundles involved.  Assuming that the flags $E_{\bullet},F_{\bullet}$ are ``sufficiently generic" (in a sense that can be made precise), the problem reduces to the universal case of finding formulas for the fundamental classes of Schubert loci in the flag bundle $Fl(V)$.  Moreover, it is enough to find a formula for the smallest Schubert locus (that corresponding to a point in every fiber).  One may then deduce formulas for larger loci from this formula by applying ``divided difference operators", moving inductively up the (weak) Bruhat order.

Graham considered this problem in a more universal and Lie-theoretic setting.  Let $G$ be a reductive algebraic group over $\C$, with $T \subseteq B \subseteq G$ a maximal torus and Borel subgroup, respectively.  Denote by $E$ the total space of a universal principal $G$-bundle.  This is a contractible space with a free action of $G$ (hence also a free action of $B$, by restriction).  Let $BB$ and $BG$ denote the spaces $E/B$ and $E/G$, respectively.  Then $BB$ and $BG$ are classifying spaces for the groups $B$ and $G$.    In the setting of \cite{Graham-97}, the primary object of interest is the diagonal $\Delta \subseteq BB \times_{BG} BB$.  After a translation between $H^*(BB \times_{BG} BB)$ and the $T$-equivariant cohomology $H_T^*(G/B)$ of $G/B$, one sees that the problem of describing $[\Delta] \in H^*(BB \times_{BG} BB)$ is equivalent to that of describing the $T$-equivariant class of a point.  In the setting of $T$-equivariant cohomology, one has use of the localization theorem, which allows one to verify the correctness of a formula for the class of a point simply by checking that it restricts correctly at all of the $T$-fixed points.  The observation is that had a formula for this class not already been discovered by Fulton using other methods, it might have been determined simply by identifying how it should restrict at each fixed point and attempting to guess a class which restricts as required.

Of course, this observation is of limited use in the case of Schubert varieties, since formulas for their equivariant classes are already known, but it suggests that perhaps equivariant classes of other loci with torus actions, for which we do not already know formulas, could be computed in this way.  With this in mind, we turn now to our primary objects of interest, the closures of orbits of symmetric subgroups on $G/B$.  Let $G$ be a connected, complex, simple algebraic group of classical type.  Let $\theta$ be a (holomorphic) involution of $G$ - that is, $\theta$ is an automorphism of $G$ whose square is the identity.  Fix $T \subseteq B$, a $\theta$-stable maximal torus and Borel subgroup of $G$, respectively.  Let $K = G^{\theta}$ be the subgroup of elements of $G$ which are fixed by $\theta$.  Such a subgroup of $G$ is referred to as a \textit{symmetric subgroup}.

$K$ acts on the flag variety $G/B$ with finitely many orbits (\cite{Matsuki-79}), and the geometry of these orbits and their closures plays an important role in the theory of Harish-Chandra modules for a certain real form $G_{\R}$ of the group $G$ --- namely, one containing a maximal compact subgroup $K_{\R}$ whose complexification is $K$.  For this reason, the geometry of $K$-orbits and their closures have been studied extensively, primarily in representation-theoretic contexts.

Their role in the representation theory of real groups aside, $K$-orbit closures can be thought of as generalizations of Schubert varieties, and, in principle, any question one has about Schubert varieties may also be posed about $K$-orbit closures.  In the present paper, we apply equivariant localization as described above to discover previously unknown formulas for the $S$-equivariant fundamental classes of $K$-orbit closures on $G/B$, where $S = K \cap T$, a maximal torus of $K$ contained in $T$.  (Note that the $K$-orbit closures do not have a $T$-action, which is why we work with respect to the smaller torus.)  We do so for the symmetric pairs $(GL(n,\C),O(n,\C))$, $(SL(n,\C),SO(n,\C))$, and $(GL(2n,\C),Sp(2n,\C))$.

In each case, this is done in two steps.  First, we identify the closed orbits and their restrictions at the various $S$-fixed points.  Using this information, we produce polynomials in the generators of $H_S^*(G/B)$ which restrict at the $S$-fixed points as required.  We then conclude by the localization theorem that these polynomials represent the equivariant fundamental classes of the closed $K$-orbits.

Second, we outline how divided difference operators can be used to deduce formulas for the fundamental classes of the remaining orbit closures.  This is analogous to what is done for Schubert varieties.  Although combinatorial parametrizations of $K \backslash G/B$, as well as descriptions of its weak closure order in terms of such parametrizations, are typically more complicated than the case of Schubert varieties, in the cases treated in this paper, things are relatively straightforward.  Indeed, the orbit sets in each case can be parametrized by a subset of the Weyl group, consisting of involutions in the case $K=O(n,\C)$, and of fixed point-free involutions in the case $K=Sp(n,\C)$.

Having carried out these computations in our examples, we finally discover that the $K$-orbit closures parametrize degeneracy loci of the type described in (\ref{eqn:main-result-eqn}), by examining linear algebraic descriptions of the orbit closures as sets of flags, considering their isomorphic images in the universal space $BK \times_{BG} BB$, and by considering what sorts of additional structures on a vector bundle over a variety give rise to a classifying map into this universal space.

The paper is organized as follows:  In Section 1, we cover some preliminaries on equivariant cohomology and localization; the general means by which we hope to apply these techniques to closed $K$-orbits; and the way in which divided difference operators can then be used to determine formulas for the remaining orbit closures.  In Section 2, we carry this out explicitly in our examples.  Finally, in Section 3, we give the details of the translation between our formulas for $K$-orbit closures and Chern class formulas for degeneracy loci of the type described in (\ref{eqn:main-result-eqn}).

The results presented here are part of the author's PhD thesis, written at the University of Georgia under the direction of his research advisor, William A. Graham.  The author thanks Professor Graham wholeheartedly for his help in conceiving that project, as well as for his great generosity with his time and expertise throughout.

\section{Preliminaries}
\subsection{Notation}\label{ssec:notation}
Here we define some notations which will be used throughout the paper.

We denote by $I_n$ the $n \times n$ identity matrix, and by $J_n$ the $n \times n$ matrix with $1$'s on the antidiagonal and $0$'s elsewhere, i.e. the matrix $(e_{i,j}) = \delta_{i,n+1-j}$.  $J_{n,n}$ shall denote the block matrix which has $J_n$ in the upper-right block, $-J_n$ in the lower-left block, and $0$'s elsewhere.  That is,
\[  J_{n,n} := 
\begin{pmatrix}
0 & J_n \\
-J_n & 0 \end{pmatrix}. \]

We will use both ``one-line" notation and cycle notation for permutations.  When giving a permutation in one-line notation, the sequence of values will be listed with no delimiters, while for cycle notation, parentheses and commas will be used.  Hopefully this will remove any possibility for confusion on the part of the reader.  So, for example, the permutation $\pi \in S_4$ which sends $1$ to $2$, $2$ to $3$, $3$ to $1$, and $4$ to $4$ will be given in one-line notation as $2314$ and in cycle notation as $(1,2,3)$.

We will consider signed permutations of $\{1,\hdots,n\}$ viewed as embedded in some larger symmetric group, either $S_{2n}$ or $S_{2n+1}$, as follows:  The signed permutation $\pi$ of $\{1,\hdots,n\}$ is associated to the permutation $\sigma \in S_{2n}$ defined by
\[ \sigma(i) = 
\begin{cases}
	\pi(i) & \text{ if $\pi(i) > 0$} \\
	2n+1-|\pi(i)| & \text{ if $\pi(i) < 0$,}
\end{cases} \]
and
\[ \sigma(2n+1-i) = 2n+1-\sigma(i) \]
for $i=1,\hdots,n$.

Embedding signed permutations in $S_{2n+1}$ works the same way, with $2n$ replaced by $2n+1$ in the definitions above.  Note that this forces $\sigma(n+1) = n+1$.

When dealing with a signed permutation $w$ of $\{1,\hdots,n\}$, we will at times want to consider what we call the ``absolute value" of $w$, which we denote $|w|$.  This is defined in the obvious way, by $|w|(i) = |w(i)|$.  So for example, if $1 \overline{3} \overline{2}$ denotes the signed permutation sending $1 \mapsto 1$, $2 \mapsto -3$, and $3 \mapsto -2$, we have that $|1 \overline{3} \overline{2}| = 132$.

We will also deal often with flags, i.e. chains of subspaces of a given vector space $V$.  A flag
\[ \{0\} \subset F_1 \subset F_2 \subset \hdots \subset F_{n-1} \subset F_n = V \]
will often be denoted by $F_{\bullet}$.  When we wish to specify the components $F_i$ of a given flag $F_{\bullet}$ explicitly, we will typically use the shorthand notation
\[ F_{\bullet} = \left\langle v_1,\hdots,v_n \right\rangle, \]
which shall mean that $F_i$ is the linear span $\C \cdot \left\langle v_1,\hdots,v_i \right\rangle$ for each $i$.

We will always be dealing with characters of tori $S$ (the maximal torus of $K$) and $T$ (the maximal torus of $G$).  Characters of $S$ will be denoted by capital $Y$ variables, while characters of $T$ will be denoted by capital $X$ variables.  Equivariant cohomology classes, on the other hand, will be represented by polynomials in lower-case $x$ and $y$ variables, where the lower-case variable $x_i$ means $1 \otimes X_i$, and where the lower-case variable $y_i$ means $Y_i \otimes 1$.  (See Proposition \ref{prop:eqvt-cohom-flag-var}.)

Unless stated otherwise, $H^*(-)$ shall always mean cohomology with $\C$-coefficients.

Lastly, we note here once and for all that $K \backslash G/B$ should always be taken to mean the set of $K$-orbits on $G/B$, unless explicitly stated otherwise.  (This as opposed to $B$-orbits on $K \backslash G$, or $B \times K$-orbits on $G$.)

\subsection{Equivariant cohomology (of the flag variety), and the localization theorem}\label{ssec:eqvt_cohom}
Our primary cohomology theory is equivariant cohomology with respect to the action of a maximal torus $S$ of $K$.  The $S$-equivariant cohomology of an $S$-variety $X$ is, by definition,
\[ H_S^*(X) := H^*((ES \times X)/S). \]
Here, $ES$ denotes the total space of a universal principal $S$-bundle (a contractible space with a free $S$-action), as in the introduction.  In the next section, we will also briefly refer to $S$-equivariant homology, which is by definition the \textit{Borel-Moore} homology $H_*((ES \times X)/S)$.  (For information on Borel-Moore homology, see e.g. \cite[\S B.2]{Fulton-YoungTableaux}.)  For smooth $X$, which is all we shall be concerned with here, the two theories are identified via Poincar\'{e} duality, so we work almost exclusively with cohomology.

Note that $H_S^*(X)$ is always an algebra for the ring $\Lambda_S := H_S^*(\{\text{pt.}\})$, the $S$-equivariant cohomology of a 1-point space (equipped with trivial $S$-action).  The algebra structure is given by pullback through the constant map $X \rightarrow \{\text{pt.}\}$.

Taking $X$ to be the flag variety $G/B$, we now describe $H_S^*(X)$ explicitly.  Let $R = S(\frt^*)$, the $\C$-symmetric algebra on the dual to the Lie algebra $\frt$ of a maximal torus $T$ of $G$.  Let $R' = S(\frs^*)$, the $\C$-symmetric algebra on the dual to the Lie algebra $\frs$ of $S$.  It is a standard fact that $R \cong \Lambda_T$, and $R' \cong \Lambda_S$.  Let $n$ be the dimension of $T$, and let $r$ be the dimension of $S$.  Let $X_1,\hdots,X_n$ denote coordinates on $\frt^*$, taken as generators for the algebra $R$.  Likewise, let $Y_1,\hdots,Y_r$ denote coordinates on $\frs^*$, algebra generators for $R'$.

Note that there is a map $R \rightarrow R'$ induced by restriction of characters, whence $R'$ is a module for $R$.  Note also that $W$ acts on $R$, since it acts naturally on the characters $X_i$.  Then it makes sense to form the tensor product $R' \otimes_{R^W} R$.  As it turns out, this is the $S$-equivariant cohomology of $X$.

\begin{prop}\label{prop:eqvt-cohom-flag-var}
With notation as above, $H_S^*(X) = R' \otimes_{R^W} R$.  Thus elements of $H_S^*(X)$ are represented by polynomials in variables $x_i := 1 \otimes X_i$ and $y_i := Y_i \otimes 1$.
\end{prop}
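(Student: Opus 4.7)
The plan is to reduce to the classical $T$-equivariant statement by a change-of-groups argument. First I would invoke Borel's theorem: applying Leray-Hirsch to the fibration $G/B \to ET \times_T G/B \to BT$, together with the identifications $H^*(BT) \cong R$ and $H^*(BG) \cong R^W$, gives
\[ H_T^*(G/B) \cong R \otimes_{R^W} R, \]
where the first tensor factor records the $\Lambda_T = R$ coefficient structure and the second captures the ordinary cohomology $H^*(G/B) \cong R / R_+^W R$. The Schubert cell decomposition is $T$-stable and concentrated in even real degree, so the associated spectral sequence degenerates and $H_T^*(G/B)$ is a \emph{free} $\Lambda_T$-module; this freeness is the key input for the second step.

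Next I would pass from $T$ to the subtorus $S$. Taking a common contractible free $T$-space as a model for both $ET$ and $ES$, one obtains a fiber bundle
\[ T/S \;\longrightarrow\; ES \times_S G/B \;\longrightarrow\; ET \times_T G/B. \]
Because $H_T^*(G/B)$ is free over $\Lambda_T$, the Eilenberg--Moore (or equivalently Leray--Hirsch) argument applied to this bundle yields the change-of-groups isomorphism
\[ H_S^*(G/B) \cong H_T^*(G/B) \otimes_{\Lambda_T} \Lambda_S. \]
Since the $\Lambda_T$-action on $R \otimes_{R^W} R$ is via the first factor, substituting Borel's formula gives
\[ H_S^*(G/B) \cong (R \otimes_{R^W} R) \otimes_R R' \cong R' \otimes_{R^W} R, \]
where $R'$ is an $R^W$-algebra through the composition $R^W \hookrightarrow R \to R'$ induced by the restriction of characters. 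Under this identification the classes $y_i = Y_i \otimes 1$ generate the left factor and the classes $x_i = 1 \otimes X_i$ generate the right factor, matching the notational convention of Subsection \ref{ssec:notation}.

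The main obstacle is the cleanest justification of the change-of-groups isomorphism. Once one has the freeness of $H_T^*(G/B)$ over $\Lambda_T$ (equivalently, the surjectivity of $H_T^*(G/B) \to H^*(G/B)$ furnished by Schubert classes), the collapse of the relevant spectral sequence is routine; the remaining manipulations are purely formal tensor-product identifications. The Borel presentation of $H_T^*(G/B)$ is a standard input that one may simply cite.
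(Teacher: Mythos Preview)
Your argument is correct but proceeds by a genuinely different route from the paper's. The paper does not reduce to the $T$-equivariant case; instead it builds the ring map $R' \otimes_{R^W} R \to H_S^*(G/B)$ directly, first using Leray--Hirsch on $E \times^S(G/B) \to BS$ to see that the target is free over $R'$ of rank $|W|$, then defining a surjection $p \otimes q \colon R' \otimes_{\C} R \to H_S^*(G/B)$ (with $q(\lambda) = c_1^S(L_\lambda)$), and finally checking that this factors through $R' \otimes_{R^W} R$ by exhibiting an explicit isomorphism $E \times^S(G/B) \cong BS \times_{BG} BB$ and invoking the evident well-definedness of $H^*(BS) \otimes_{H^*(BG)} H^*(BB) \to H^*(BS \times_{BG} BB)$. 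Your change-of-groups shortcut is cleaner as pure algebra; one small correction is that the collapse you need is that of the Eilenberg--Moore spectral sequence for the \emph{pullback square} over $BS \to BT$ (equivalently, a second Leray--Hirsch for $E\times^S(G/B)\to BS$), not Leray--Hirsch for the $T/S$-fibration you name: $T/S$ has odd cohomology while $H_S^*(G/B)$ is concentrated in even degrees, so that fibration cannot be cohomologically trivial. What the paper's longer argument buys is an explicit geometric realization of the isomorphism and of the generators $x_i$, $y_i$ as first Chern classes of specific line bundles on $BS \times_{BG} BB$; this identification is used essentially in Proposition~\ref{prop:restriction-maps} and again in the degeneracy-locus translation of Section~\ref{sec:deg-loci}.
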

\begin{proof}
For the case $S=T$, this is the well-known fact that $H_T^*(X) \cong R \otimes_{R^W} R$, for which a proof can be found in \cite{Brion-98_i}.  For lack of a reference in the more general case, when $S$ may be a strict subtorus of $T$, we provide a proof here, which of course applies also to the case $S=T$.

It is easy to see that $H_S^*(X)$ is free over $R'$ of rank $|W|$.  Indeed, we have a flag bundle $E \times^S (G/B) \rightarrow BS$.  This is a locally trivial fibration with fiber isomorphic to $G/B$.  On the space $E \times^S (G/B)$, for any character $\lambda \in \widehat{T}$, we have a line bundle $\caL_{\lambda}$ which restricts to the line bundle $L_{\lambda} = G \times^B \C_{\lambda}$ over the fiber $G/B$.  Express the $|W|$ Schubert classes (a basis for $H^*(G/B)$) as polynomials in the Chern classes of these line bundles.  Then those same polynomials evaluated at the Chern classes of the line bundles $\caL_{\lambda}$ give $|W|$ classes in $H^*(E \times^S G/B)$ which restrict to a basis for the cohomology of $H^*(G/B)$.  The claim now follows from the Leray-Hirsch Theorem.

Now, note that there is a map $R' \otimes_{\C} R \rightarrow H_S^*(G/B)$.  The map is the tensor product of two maps, $p: R' \rightarrow H_S^*(G/B)$ and $q: R \rightarrow H_S^*(G/B)$.  The map $p$ is pullback through the map to a point, as described above.  The map $q$ takes a character $\lambda$ to $c_1(\caL_{\lambda})$.  The map $p \otimes q$ is surjective, since the $S$-equivariant Schubert classes are hit by the map $q$ on the second factor.

Since $R$ is free over $R^W$ of rank $|W|$, $R' \otimes_{R^W} R$ is free over $R'$ of rank $|W|$, hence $H_S^*(G/B)$ and $R' \otimes_{R^W} R$ are both free $R'$-modules of the same rank.  Consider the possibility that $p \otimes q$ factors through $R' \otimes_{R^W} R$ --- that is, suppose that $x \otimes y \mapsto p(x) q(y)$ is a well-defined map $R' \otimes_{R^W} R \rightarrow H_S^*(G/B)$.   If so, then this map is clearly surjective, since $p \otimes q$ is, so it is injective as well, being a map of free $R'$-modules of the same rank.  The map is moreover a ring homomorphism, and so it is in fact an isomorphism of rings.

Thus we need only see that the map $\phi: R' \otimes_{R^W} R \rightarrow H_S^*(G/B)$ given by $\phi(\ga \otimes \beta) = p(\ga)q(\beta)$ is well-defined.  To see this, note first that the space $E \times^S (G/B)$ is isomorphic to the space $BS \times_{BG} BB$.  Indeed, the map $E \times G \rightarrow E \times_{BG} E$ given by $(e,g) \mapsto (e,eg)$ is an isomorphism, since $E \rightarrow BG$ is a principal $G$-bundle.  This map is $S \times B$-equivariant, where $S \times B$ acts on $E \times G$ by $(e,g).(s,b) = (es,s^{-1}gb)$, and on $E \times_{BG} E$ by $(e_1,e_2).(s,b) = (e_1s,e_2b)$.  Thus the isomorphism descends to quotients, and $(E \times G) / (S \times B) \cong E \times^S (G/B)$, while $(E \times_{BG} E) / (S \times B) \cong BS \times_{BG} BB$.

Now, we have a map $\wt{\phi}: H^*(BS) \otimes_{H^*(BG)} H^*(BB) \rightarrow H^*(BS \times_{BG} BB)$ given by $\alpha \otimes \beta \mapsto \pi_1^*(\alpha) \pi_2^*(\beta)$, where $\pi_1, \pi_2$ are the projections from $BS \times_{BG} BB$ onto $BS$ and $BB$, respectively.  There is no question of this map being well-defined; that it is well-defined is immediate given commutativity of the square
\[
\xymatrixcolsep{1pc} 
\xymatrixrowsep{2pc}
\xymatrix
{BS \times_{BG} BB \ar@{>}[r] \ar@{>}[d] & BB \ar@{>}[d] \\
BS \ar@{>}[r] & BG
}
\]
It is well-known that $H^*(BS) \cong R'$, $H^*(BB) \cong R$, and $H^*(BG) \cong R^W$, so clearly $H^*(BS) \otimes_{H^*(BG)} H^*(BB) \cong R' \otimes_{R^W} R$.  Thus to see that $\phi$ is well-defined, we can simply observe that it is precisely the map $\wt{\phi}$ when $H^*(BS) \otimes_{H^*(BG)} H^*(BB)$ is identified with $R' \otimes_{R^W} R$, and $H^*(BS \times_{BG} BB)$ is identified with $H_S^*(G/B) = H^*(E \times^S (G/B))$ via the isomorphism described above.

On the first factor $R'$, the map $\phi$ maps a character $\lambda$ of $S$ to $c_1((E \times^S \C_{\lambda}) \times (G/B))$.  The bundle $(E \times^S \C_{\lambda}) \times G/B$ is the line bundle associated to the principal $S$-bundle $E \times G/B \rightarrow E \times^S (G/B)$ and the $1$-dimensional representation $\C_{\lambda}$ of $S$.  On the other hand, the map $\wt{\phi}$ maps $\lambda$ to $c_1(\pi_1^*(\caL_{\lambda}))$.  The bundle $\pi_1^*\caL_{\lambda} = (E \times^S \C_{\lambda}) \times_{BG} BB$ is the line bundle associated to the principal $S$-bundle $E \times_{BG} BB \rightarrow BS \times_{BG} BB$ and the same $1$-dimensional representation $\C_{\lambda}$ of $S$.  Since these two line bundles are associated to principal $S$-bundles which correspond via our isomorphism, and to the same representation of $S$, they are in fact the same line bundle when the two spaces are identified.  Thus $\phi$ and $\wt{\phi}$ agree on the $R'$ factor.

The story on the second factor is much the same.  The map $\phi$ maps a character $\lambda$ of $T$ to $c_1(E \times^S (G \times^B \C_{\lambda}))$, the first Chern class of the line bundle associated to the principal $B$-bundle $E \times^S G \rightarrow E \times^S (G/B)$ and the $1$-dimensional representation $\C_{\lambda}$ of $B$ (where, as usual, the $T$-action on $\C_{\lambda}$ is extended to $B$ by letting the unipotent radical act trivially).  The map $\wt{\phi}$ maps $\lambda$ to $c_1(\pi_2^*\caL_{\lambda})$, with $\pi_2^*\caL_{\lambda} = BS \times_{BG} (E \times^B \C_{\lambda})$ the line bundle associated to the principal $B$-bundle $BS \times_{BG} E \rightarrow BS \times_{BG} BB$ and the same representation of $B$.  Since these principal bundles correspond via our identification, and since the line bundles are associated to these principal bundles and the same representations of $B$, they are the same line bundle.  Thus $\phi$ and $\wt{\phi}$ agree on the $R$ factor as well.
\end{proof}

As mentioned, the $S$-equivariant cohomology of any $S$-variety $X$ is an algebra for $\Lambda_S$, the $S$-equivariant cohomology of a point.  We have the following standard localization theorem for actions of tori, one reference for which is \cite{Brion-98_i}:

\begin{theorem}
Let $X$ be an $S$-variety, and let $i: X^S \hookrightarrow X$ be the inclusion of the $S$-fixed locus of $X$.  The pullback map of $\Lambda_S$-modules 
\[ i^*: H_S^*(X) \rightarrow H_S^*(X^S) \]
is an isomorphism after a localization which inverts finitely many characters of $S$.  In particular, if $H_S^*(X)$ is free over $\Lambda_S$, then $i^*$ is injective. 
\end{theorem}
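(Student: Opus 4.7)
The plan is to reduce the theorem to a single annihilation statement about spaces without $S$-fixed points, and then feed that into the long exact sequence for the pair $(X,X^S)$. Write $U = X \setminus X^S$ for the open complement. The long exact sequence in $S$-equivariant cohomology (or Borel--Moore homology, via Poincar\'{e} duality in the smooth case)
\[
\cdots \to H_S^{*-c}(U) \to H_S^*(X) \xrightarrow{i^*} H_S^*(X^S) \to H_S^{*-c+1}(U) \to \cdots
\]
(with appropriate degree shifts coming from the codimension of $X^S$) reduces the theorem to showing that $H_S^*(U)$ is a torsion $\Lambda_S$-module annihilated by a product of finitely many characters of $S$; once this is in hand, inverting those characters kills the outer terms and turns $i^*$ into an isomorphism.

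The key lemma I would prove is the following: if $Y$ is an $S$-variety with $Y^S = \emptyset$, then some nonzero product $\chi = \prod_{\alpha \in A} c_1(\alpha) \in \Lambda_S$ of characters annihilates $H_S^*(Y)$. The argument proceeds by stratifying $Y$ into finitely many locally closed pieces of constant orbit type; since $Y^S = \emptyset$, every stabilizer $S'$ that appears is a proper (closed) subgroup of $S$. On a stratum $Z \cong S/S' \times Z^{S'}$ one has $H_S^*(Z) \cong H_{S'}^*(Z^{S'})$, which is a module over $\Lambda_{S'} = H^*(BS')$. The restriction map $\Lambda_S \twoheadrightarrow \Lambda_{S'}$ (surjective because $\Lambda_S,\Lambda_{S'}$ are polynomial rings and the subtorus inclusion is split up to isogeny) has its kernel generated by $c_1$ of characters of $S$ that are trivial on $S'$; any such nontrivial character $\alpha$ therefore annihilates $H_S^*(Z)$. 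Running through the finitely many strata and multiplying together one such character per stratum, and then combining the strata via the long exact sequences of open/closed decompositions, yields a single element $\chi \in \Lambda_S$ annihilating all of $H_S^*(Y)$.

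Given the lemma, apply it to $U$: choose $\chi \in \Lambda_S$ with $\chi \cdot H_S^*(U) = 0$. Localizing at the multiplicative system generated by $\chi$ (and its cousins from the relative term) makes $H_S^*(U)[\chi^{-1}] = 0$, and the long exact sequence collapses to give $i^*[\chi^{-1}]\colon H_S^*(X)[\chi^{-1}] \xrightarrow{\sim} H_S^*(X^S)[\chi^{-1}]$, which is the localization statement.

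For the final injectivity assertion, suppose $H_S^*(X)$ is free over $\Lambda_S$. Then $\Lambda_S$ embeds into its localization at any nonzero element (since $\Lambda_S$ is an integral domain, being a polynomial ring), so the canonical map $H_S^*(X) \hookrightarrow H_S^*(X)[\chi^{-1}]$ is injective. Composing this with the isomorphism $i^*[\chi^{-1}]$ gives a factorization of $i^*$ through an injection followed by an isomorphism, so $i^*$ itself is injective. The main obstacle in the proof is the lemma on torsion for $S$-spaces without fixed points; everything else is formal manipulation of long exact sequences and of the integral domain $\Lambda_S$.
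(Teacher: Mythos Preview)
The paper does not give its own proof of this theorem; it simply cites it as a standard result, referring to Brion's survey \cite{Brion-98_i}. So there is no ``paper's proof'' to compare against. Your outline is essentially the standard argument one finds in that reference: reduce to the torsion lemma for fixed-point-free $S$-spaces via a stratification by orbit type, then feed that into a long exact sequence.

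One point of imprecision worth flagging: the long exact sequence you wrote, with outer terms $H_S^{*-c}(U)$, is not literally correct in this generality. The exact sequence of the pair $(X,X^S)$ has outer terms $H_S^*(X,X^S)$, the relative (equivariant) cohomology, and it is \emph{that} module you must show is torsion. Your degree-shifted $H_S^*(U)$ appears only after invoking Thom/Alexander-type duality, which requires smoothness hypotheses on $X$ and on the embedding of $X^S$ that are not part of the statement. The fix is minor: the same orbit-type stratification argument you give for $H_S^*(U)$ works verbatim for $H_S^*(X,X^S)$ (or one works in equivariant Borel--Moore homology from the start, where the open/closed sequence has the shape you want). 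Your key lemma and the final deduction of injectivity from freeness of $H_S^*(X)$ over the integral domain $\Lambda_S$ are both correct as stated.
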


The last statement is what is relevant for us, since when $X$ is the flag variety, $H_S^*(X) = R' \otimes_{R^W} R$ is free over $R'$.  Thus in the case of the flag variety, the localization theorem tells us that any equivariant class is entirely determined by its image under $i^*$.  As noted in the next section (cf. Proposition \ref{prop:s-fixed-points}), the locus of $S$-fixed points is finite, and indexed by the Weyl group $W$, even in the event that $S$ is a proper subtorus of the maximal torus $T$ of $G$.  Thus in our setup, 
\[ H_S^*(X^S) \cong \bigoplus_{w \in W} \Lambda_S, \]
so that in fact a class in $H_S^*(X)$ is determined by its image under $i_w^*$ for each $w \in W$, where here $i_w$ denotes the inclusion of the $S$-fixed point $wB$.    Given a class $\beta \in H_S^*(X)$ and an $S$-fixed point $wB$, we will typically denote the restriction $i_w^*(\beta)$ at $wB$ by $\beta|_{wB}$, or simply by $\beta|_w$ if no confusion seems likely to arise.

Suppose that $Y$ is a closed $K$-orbit.  We denote by $[Y] \in H_S^*(X)$ its $S$-equivariant fundamental class.  For the sake of clarity, we explain this abuse of notation.  To be precise, by $[Y]$ we mean the Poincar\'{e} dual to the direct image of the fundamental (equivariant) homology class of $Y$ in $H_*^S(X)$.  This is the unique equivariant cohomology class $\alpha \in H_S^*(X)$ having the property that $\alpha \cap [X] = [Y]$.

We describe in the next section how to compute $[Y]|_{wB}$ for $w \in W$.  Since $[Y]$ is completely determined by these restrictions, the idea is to compute them and then try to ``guess" a formula for $[Y]$ based on them.  For us, a ``formula for $[Y]$" is a polynomial in the variables $x_i$ and $y_i$ (defined in the statement of Proposition \ref{prop:eqvt-cohom-flag-var}) which represents $[Y]$.  Note that such a formula amounts to a particular choice of lift of $[Y]$ from $R' \otimes_{R^W} R$ to $R' \otimes_{\C} R$.

To be able to tell whether a given guess at a formula for $[Y]$ is correct, we must understand how the restriction maps $i_w^*$ work.  That is the content of the next proposition.

\begin{prop}\label{prop:restriction-maps}
Suppose that $\beta \in H_S^*(X)$ is represented by the polynomial $f = f(x,y)$ in the $x_i$ and $y_i$.  Then $\beta|_{wB} \in \Lambda_S$ is the polynomial $f(\rho(wX),Y)$.  Here, $\rho$ denotes the restriction $\frt^* \rightarrow \frs^*$.
\end{prop}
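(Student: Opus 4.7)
The plan is to reduce the statement to a computation on the generators of $R' \otimes_{R^W} R$, using the explicit description of $\phi$ given in the proof of Proposition \ref{prop:eqvt-cohom-flag-var}. Since $\phi$ is a ring homomorphism and restriction to a fixed point is a ring homomorphism of $\Lambda_S$-algebras, it suffices to verify the two formulas $p(\gamma)|_{wB} = \gamma$ for $\gamma \in R'$ linear, and $q(\lambda)|_{wB} = \rho(w\lambda)$ for $\lambda = X_i \in \widehat{T}$; the general statement then follows by multiplicativity.

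First I would handle the $R'$ factor. Recall that $p: R' = \Lambda_S \to H_S^*(G/B)$ is pullback through the constant map $G/B \to \{\mathrm{pt}\}$. Since $i_w: \{wB\} \hookrightarrow G/B$ composed with the map to a point is again the map of a point to a point, the composition $i_w^* \circ p$ is the identity on $\Lambda_S$. Hence $p(Y_j)|_{wB} = Y_j$, which accounts for the second argument of $f$.

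Second, I would handle the $R$ factor, which is the main content of the proof. Here $q(\lambda) = c_1(\caL_\lambda)$, where $\caL_\lambda$ is the line bundle on $E \times^S (G/B)$ obtained by the Borel construction from $L_\lambda = G \times^B \C_\lambda$. The key step is to identify the pullback of $\caL_\lambda$ along the inclusion $i_w: BS \cong ES \times^S \{wB\} \hookrightarrow ES \times^S (G/B)$. Since $\{wB\}$ is an $S$-fixed point, the fiber $(L_\lambda)_{wB}$ is a one-dimensional $T$-representation, and the standard calculation for $G \times^B \C_\lambda$ at the $T$-fixed point $wB$ identifies this representation with $\C_{w\lambda}$ as a character of $T$. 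Restricting the $T$-action to $S$ via $\rho$, we obtain $i_w^*\caL_\lambda \cong ES \times^S \C_{\rho(w\lambda)}$, a line bundle on $BS$ whose equivariant first Chern class is $\rho(w\lambda) \in R' = \Lambda_S$. Thus $q(X_i)|_{wB} = \rho(wX_i)$, which accounts for the first argument of $f$.

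The only subtlety I expect to encounter is pinning down sign conventions in the identification of $(L_\lambda)_{wB}$ with $\C_{w\lambda}$ as a $T$-module, which depends on how one sets up $L_\lambda = G \times^B \C_\lambda$ (whether $B$ acts by $\lambda$ or $-\lambda$). Once this convention is fixed consistently with the statement of the proposition, the rest is a direct assembly: multiplicativity of restriction combined with the two computations above gives that the class represented by a monomial $\prod y_j^{a_j} \prod x_i^{b_i}$ restricts at $wB$ to $\prod Y_j^{a_j} \prod \rho(wX_i)^{b_i}$, and by linearity this extends to arbitrary polynomials $f(x,y)$, yielding $f(\rho(wX), Y)$ as claimed.
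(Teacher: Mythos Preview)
Your proposal is correct and follows essentially the same approach as the paper: both reduce to checking the restriction on the generators $y_i$ and $x_i$, handle the $y_i$ via the factorization of $i_w$ through the map to a point, and handle the $x_i$ by identifying the fiber of the line bundle $L_{X_i}$ at $wB$ as the $T$-representation $\C_{wX_i}$ and then restricting the weight to $S$ via $\rho$. The only differences are cosmetic --- the paper routes the second computation through $BT$ before restricting to $BS$, whereas you restrict to $S$ directly --- and your remark on sign conventions is a reasonable caveat but not an obstacle once conventions are fixed as in the paper.
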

\begin{proof}
It suffices to check that 
\[ y_i|_{wB} = Y_i, \]
and that
\[ x_i|_{wB} = \rho(wX_i). \]

For the first, recall that the class $y_i \in H_S^*(X)$ is $\pi^*(Y_i)$, where $\pi: X \rightarrow \{\text{pt.}\}$ is the map to a point, and $Y_i \in \frs^*$ is a coordinate on $\frs$.  Letting $i_w$ denote the inclusion of the fixed point $wB$ into $X$, we have that $\pi \circ i_w = id$, so that $i_w^* \circ \pi^*$ is the identity on $H_S^*(\{wB\})$.  Thus $i_w^*(y_i) = i_w^*(\pi^*(Y_i)) = Y_i$, which is what is being claimed.

For the second, recall that $x_i$ is the $S$-equivariant Chern class $c_1^S(L_{X_i}) = c_1(E \times^S L_{X_i})$, with $X_i \in \frt^*$.  Thus
\[ i_w^*(x_i) = i_w^*(c_1(E \times^S L_{X_i})) = c_1(i_w^*(E \times^S L_{X_i})). \]
The bundle $i_w^*(E \times^S L_{X_i})$ over $BS$ is pulled back from the bundle $i_w^*(E \times^T L_{X_i})$ over $BT$ through the natural map $BS \rightarrow BT$.  The bundle $i_w^*(E \times^T L_{X_i})$ corresponds to a $T$-equivariant bundle over $\{wB\}$ (i.e. a representation of $T$) having weight $wX_i$, as one easily checks.  Thus the bundle $i_w^*(E \times^S L_{X_i})$ corresponds to an $S$-equivariant bundle over $\{wB\}$ having $S$-weight $\rho(wX_i)$, since the pullback $\Lambda_T \rightarrow \Lambda_S$ through the map $BS \rightarrow BT$ is determined by restriction of characters.
\end{proof}

\subsection{Closed Orbits}\label{ssec:closed_orbits}
Let $G, B, T, K, S, W$ be as in the introduction.  Let $\Phi = \Phi(G,T)$ denote the roots of $G$.  Let $\Phi^+$ denote the positive system of $\Phi$ such that the roots of $B$ are \textit{negative}, and denote $\Phi^- = -\Phi^+ = \Phi(B,T)$.  Let $X = G/B$ be the flag variety.

In our computations of equivariant classes, the closed orbits play a key role.  These are the orbits for whose classes we give formulas explicitly.  We use equivariant localization as described in the previous section to verify the correctness of these formulas.  Taking such formulas as a starting point, formulas for classes of remaining orbit closures can then be computed using divided difference operators, as explained in the next section.

In this subsection, we give the general facts regarding the closed orbits which we use to compute their equivariant classes.  By equivariant localization, to determine a formula for the $S$-equivariant class of a closed orbit, it suffices, at least in principle, to compute the restriction of this class at each $S$-fixed point.  We start by identifying the $S$-fixed points.  We know that the $T$-fixed points are finite, and indexed by $W$.  The question is whether $X^S$ can be larger than this, in the event that $S$ is a proper subtorus of $T$.  In fact, it cannot.  We refer to \cite{Brion-99} for the following result:

\begin{prop}[\cite{Brion-99}]\label{prop:s-fixed-points}
If $K = G^{\theta}$ is a symmetric subgroup of $G$, $T$ is a $\theta$-stable maximal torus of $G$, and $S$ is a maximal torus of $K$ contained in $T$, then $(G/B)^S = (G/B)^T$.
\end{prop}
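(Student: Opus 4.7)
The plan is to establish the non-trivial inclusion $(G/B)^{S} \subseteq (G/B)^{T}$; the reverse containment is immediate from $S \subseteq T$. The whole proof turns on the purely group-theoretic claim that $Z_{G}(S) = T$, after which the proposition follows by a short argument using a regular element.

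Granting the claim, note first that maximality of $S$ as a torus of $K$ contained in $T$ forces $S = (T^{\theta})^{0}$, so $\mathfrak{s} = \mathfrak{t}^{\theta}$. The roots of the Levi $Z_{G}(S)$ with respect to $T$ are exactly those $\alpha \in \Phi(G,T)$ that vanish on $\mathfrak{s}$; under the decomposition $\mathfrak{t}^{*} = (\mathfrak{t}^{*})^{\theta} \oplus (\mathfrak{t}^{*})^{-\theta}$, these are precisely the roots in the $-1$-eigenspace of $\theta^{*}$, i.e.\ the ``real'' roots with $\theta^{*}(\alpha) = -\alpha$. If no such root exists, then each $\alpha$ restricts to a nontrivial character of $S$, and the open subset of $S$ on which every $\alpha$ is nontrivial is nonempty. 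Any $s$ in this subset is a regular semisimple element of $G$, and $T$ is then the unique maximal torus of $G$ containing $s$. For every $gB \in (G/B)^{S} \subseteq (G/B)^{s}$, the Borel $gBg^{-1}$ contains $s$, hence also its unique maximal torus $T$; so $gB \in (G/B)^{T}$.

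It remains to justify the claim that no real root exists. For the three symmetric pairs treated in the paper, $\theta^{*}$ acts on the standard character coordinates of $T$ in $G = \GL(n,\C)$, $\SL(n,\C)$, or $\GL(2n,\C)$ by a permutation (determined by $\gamma$) composed with negation, and direct inspection of the roots $\alpha_{ij} = X_{i} - X_{j}$ shows that the equation $\theta^{*}(\alpha_{ij}) = -\alpha_{ij}$ has no solutions. In full generality one invokes the classical fact that a fundamental (maximally compact) Cartan admits no real roots, proved by a Cayley transform: given a hypothetical real root $\alpha$, choose an $\mathfrak{sl}_{2}$-triple $(E, F, H)$ with $E \in \mathfrak{g}_{\alpha}$, $F = \theta(E) \in \mathfrak{g}_{-\alpha}$, and $H = [E,F] \in \mathfrak{t}^{-\theta}$, then replace $\C H$ by $\C (E+F) \subset \mathfrak{k}$ to obtain a $\theta$-stable Cartan $T'$ with $\dim (T')^{\theta} > \dim T^{\theta}$, contradicting the maximality of $S = (T^{\theta})^{0}$ as a torus of $K$.

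The main obstacle is this last input, the non-existence of real roots in a fundamental Cartan. Apart from it, the argument is a routine combination of the generic-element principle (the $G$-regular locus in $S$ is Zariski-open once $Z_{G}(S)=T$) and the fact that a regular semisimple element lies in a unique maximal torus.
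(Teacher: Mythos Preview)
Your argument is correct. Note, however, that the paper does not actually supply a proof of this proposition: it simply cites \cite{Brion-99} and moves on. So there is nothing to compare against in the paper itself beyond the reference.

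That said, your approach is exactly the one underlying the cited result, and in fact the paper invokes the same key ingredient later. Immediately after Proposition~\ref{prop:closed-orbits-smooth}, the paper uses the identity $Z_G(S)=T$ (again citing \cite{Springer-85,Brion-99} rather than proving it) to justify the inclusion $W_K\hookrightarrow W$. Your proof makes this identity do double duty: once $Z_G(S)=T$, a generic $s\in S$ is regular in $G$, and the standard ``regular element lies in a unique maximal torus, which is then contained in every Borel through it'' argument finishes things off. This is clean and is essentially how the result is proved in the literature.

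A couple of small remarks on presentation. First, your phrase ``maximality of $S$ as a torus of $K$ contained in $T$'' is slightly weaker than the hypothesis, which is that $S$ is a maximal torus of $K$ (absolutely) that happens to sit in $T$; it is the absolute maximality that you actually use, both to get $S=(T^\theta)^0$ and to derive the contradiction at the end of the Cayley argument. Second, in the Cayley step you should note that $B(E,\theta E)\neq 0$ for $E\neq 0$ in the one-dimensional root space $\mathfrak g_\alpha$ (since $\theta$ carries $\mathfrak g_\alpha$ isomorphically to $\mathfrak g_{-\alpha}$ and the Killing pairing between these is nondegenerate), so the triple can indeed be normalized to an honest $\mathfrak{sl}_2$. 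With those two clarifications the general argument is complete; for the three pairs treated in the paper, your direct check that $\theta^\ast(X_i-X_j)=-(X_i-X_j)$ has no solutions already suffices.
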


With the $S$-fixed locus described, we now outline how the restriction of the class of a closed orbit to an $S$-fixed point can be computed explicitly.  The key fact that we use is the self-intersection formula.  To show that the self-intersection formula even applies, we first need the following easy result:

\begin{prop}\label{prop:closed-orbits-smooth}
	Suppose that $K$ is a connected symmetric subgroup of $G$.  Then each closed $K$-orbit is isomorphic to the flag variety for the group $K$.  In particular, any closed $K$-orbit is smooth.
\end{prop}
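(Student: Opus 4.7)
The plan is to show that for any closed $K$-orbit $Y \subseteq G/B$, the stabilizer in $K$ of a suitable point of $Y$ is exactly a Borel subgroup $B_K$ of $K$. Granting this, the orbit map identifies $Y$ with the flag variety $K/B_K$, and smoothness of $Y$ follows automatically from smoothness of the flag variety.

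First I would fix a Borel subgroup $B_K \subseteq K$ and produce a $B_K$-fixed point in $Y$. Because $Y$ is closed in $G/B$ it is complete, and since $B_K$ is connected and solvable, Borel's fixed point theorem supplies a point $gB \in Y$ with $B_K \cdot gB = gB$. Equivalently, $B_K \subseteq gBg^{-1}$, so the full $K$-stabilizer $H := K \cap gBg^{-1}$ of $gB$ contains $B_K$. Observing that $H$ sits inside the solvable Borel $gBg^{-1}$ of $G$, it is itself solvable; hence its identity component $H^0$ is a connected solvable subgroup of $K$ containing $B_K$. Since Borel subgroups are maximal among connected solvable subgroups, $H^0 = B_K$.

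The delicate final step is to promote $H^0 = B_K$ to the equality $H = B_K$. Since $H^0$ is always normal in $H$, the equality $H^0 = B_K$ shows that $H$ normalizes $B_K$, i.e., $H \subseteq N_K(B_K)$; but Borel subgroups of connected algebraic groups are self-normalizing, so $N_K(B_K) = B_K$, and therefore $H = B_K$. I expect this last step to be the main obstacle: without the self-normalization property, one would only conclude that $Y$ is a finite cover of the flag variety of $K$, which could in principle fail to be smooth. It is precisely here that connectedness of $K$, assumed in the hypothesis, plays an essential role, both in giving a well-defined Borel $B_K$ and in supplying its self-normalization.
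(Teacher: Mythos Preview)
Your proof is correct, but it is organized differently from the paper's.  The paper takes an arbitrary point $gB$ of the closed orbit and argues directly about its stabilizer $H = K \cap gBg^{-1}$: since the orbit $K/H$ is a closed subvariety of the complete variety $G/B$, it is itself complete, and the paper then invokes the standard characterization (Humphreys, \S21.3) that a closed subgroup of a connected group with complete quotient is \emph{parabolic}.  Being parabolic and solvable (as a subgroup of $gBg^{-1}$), $H$ is a Borel of $K$.  You instead first use Borel's fixed point theorem to choose a distinguished point whose stabilizer visibly contains a prechosen Borel $B_K$, and then pin $H$ down to exactly $B_K$ via maximality of Borels and their self-normalization.

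The two arguments are close cousins: the ``complete quotient $\Rightarrow$ parabolic'' result you avoid is itself proved using Borel's fixed point theorem, and the connectedness of parabolics (which the paper uses implicitly when it concludes $H$ is a Borel) rests on the same self-normalization fact you invoke explicitly.  The paper's version is shorter because it cites the packaged result; yours is more self-contained and makes transparent exactly where connectedness of $K$ is used, which you rightly flag as the essential hypothesis.
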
  
\begin{proof}
	Suppose that $K \cdot gB$ is a closed orbit.  Then $K \cdot gB \cong K / \text{Stab}_K(gB)$, and clearly, $\text{Stab}_K(gB) = g^{-1}Bg \cap K$.  Because $K \cdot gB$ is a closed subvariety of $G/B$ and because $G/B$ is complete, $K \cdot gB$ is complete as well.  Thus $g^{-1}Bg \cap K$ is a parabolic subgroup of $K$ (\cite[\S 21.3]{Humphreys-75}).  Since it contained in the Borel subgroup $g^{-1}Bg$ of $G$, it is solvable, and so it is in fact a Borel subgroup of $K$.  Thus $K \cdot gB$ is isomorphic to a quotient of $K$ by a Borel.
\end{proof}

Let $Y$ be a closed $K$-orbit, with $Y \stackrel{i}{\hookrightarrow} X$ the inclusion.  Recall that what we are trying to compute is a formula for the Poincar\'{e} dual $\alpha$ to the equivariant homology class $i_*([Y]) \in H_*^S(X)$.  (By abuse of notation, we will generally denote the class $\alpha$ by $[Y]$.)  By equivariant localization, this class     is determined by knowing $\alpha|_{wB}$ for each $w \in W$.  Suppose that $wB \in Y$.  Denote by $j_w$ the inclusion of $wB$ into $Y$, and by $i_w$ the inclusion of $wB$ into $X$, so that $i_w = i \circ j_w$.  Then in $H_*^S(X)$, we have the following:
\[ i_w^*(i_*([Y]) = (j_w^* \circ i^*)(i_*([Y])) = j_w^*((i^* \circ i_*)([Y])) =  \]
\[ j_w^*(c_d^S(N_Y X) \cap [Y]) = c_d^S(N_Y X|_{wB}) \cap j_w^*([Y]) = c_d^S(N_Y X|_{wB}) \cap [wB], \]
where $d$ is the codimension of $Y$ in $X$.  Here we have used some basic facts of intersection theory regarding pushforwards and pullbacks, for which the standard reference is \cite{Fulton-IT}.  Note that we are able to use the self-intersection formula because $Y$ is smooth, and hence $E \times^S Y$ is regularly embedded in $E \times^S X$. 

On the other hand, 
\[ i_w^*(i_*([Y])) = i_w^*(\alpha \cap [X]) = \alpha|_{wB} \cap i_w^*([X]) = \alpha|_{wB} \cap [wB]. \]
Then in $H_S^*(X)$, we have
\[ \alpha|_{wB} = c_d^S(N_Y X|_{wB}). \]

Thus computing the restriction of the class $\alpha$ at each $S$-fixed point amounts to computing $c_d^S(N_Y X|_{wB}) \in H_S^*(\{\text{pt.}\}) \cong \C[X_1,\hdots,X_r]$.  We want to compute this Chern class explicitly, as a polynomial in the $X_i$.  Note that the $S$-equivariant bundle $N_Y X|_{wB}$ is simply a representation of the torus $S$, and its top Chern class is the product of the weights of this representation.  We now compute these weights.

The $S$-module $N_Y X|_{wB}$ is simply $T_w X / T_w Y$, so we determine the weights of $S$ on $T_w X$ and $T_w Y$, then remove the weights of $T_w Y$ from those of $T_w X$.  It is standard that 
\[ T_w X = \frg / \text{Ad}(w)(\frb). \]
Since $B$ has been taken to correspond to the negative roots, the weights of $S$ on $T_w X$ are the restrictions of the following weights of $T$ on $T_w X$:
\[ \Phi \setminus w \Phi^- = w \Phi^+. \]

A similar computation can be made for $T_w Y$.  We know that
\[ T_w Y = \frk / (\frk \cap \text{Ad}(w)(\frb)), \]
so the weights of $S$ on $T_w Y$ are as follows:
\[ \Phi_K \setminus (\Phi_K \cap w\Phi^-), \]
where $\Phi_K$ denotes the roots of $K$.  Subtracting this set of weights from those on $T_w X$, we conclude the following:

\begin{prop}\label{prop:restriction-of-closed-orbit}
The weights of $S$ on $N_Y X|_{wB}$ are $\rho(w\Phi^+) \setminus (\rho(w\Phi^+) \cap \Phi_K)$, where $\rho$ denotes restriction $\frt^* \rightarrow \frs^*$.
\end{prop}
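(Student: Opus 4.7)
My plan is to organize and justify the computation sketched in the paragraph immediately preceding the statement. The key observation is that since $Y = K \cdot wB$ is smooth (Proposition \ref{prop:closed-orbits-smooth}) and $S$-stably embedded in $X$, the $S$-equivariant conormal exact sequence restricts at the fixed point $wB$ to give an identification of $S$-modules $N_Y X|_{wB} \cong T_w X / T_w Y$. To compute the $S$-weights on this quotient it therefore suffices to list the $S$-weights on $T_w X$ and $T_w Y$ separately and cancel.

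For $T_w X = \frg/\text{Ad}(w)(\frb)$, since $B$ corresponds to $\Phi^-$, the $T$-weights of $\text{Ad}(w)(\frb)$ are $\{0\} \cup w\Phi^-$, whence $T_w X$ has $T$-weights $w\Phi^+$; restricting characters along $\rho$ yields $S$-weights (with multiplicity) $\rho(w\Phi^+)$. For $T_w Y = \frk/(\frk \cap \text{Ad}(w)(\frb))$, I would use that $S$ is a maximal torus of $K$, so that the root-space decomposition $\frk = \frs \oplus \bigoplus_{\beta \in \Phi_K} \frk_\beta$ has one-dimensional root spaces. Since $\frs \subset \frt \subset \text{Ad}(w)(\frb)$, the intersection $\frk \cap \text{Ad}(w)(\frb)$ consists of $\frs$ together with precisely those one-dimensional lines $\frk_\beta$ that lie in $\text{Ad}(w)(\frb)$; this will be shown to happen exactly for $\beta \in \Phi_K \cap \rho(w\Phi^-)$, giving $T_w Y$ the $S$-weights $\Phi_K \setminus (\Phi_K \cap \rho(w\Phi^-))$. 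Subtracting this weight set from $\rho(w\Phi^+)$ cancels exactly the weights in $\Phi_K \cap \rho(w\Phi^+)$, leaving $\rho(w\Phi^+) \setminus (\rho(w\Phi^+) \cap \Phi_K)$, as claimed.

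The step I expect to require the most care is the criterion that $\frk_\beta \subset \text{Ad}(w)(\frb)$ if and only if $\beta \in \rho(w\Phi^-)$. To handle this I would invoke the $\theta$-stability of both $T$ and $B$: each line $\frk_\beta$ sits as the $\theta$-fixed subspace of $\bigoplus_{\alpha:\rho(\alpha)=\beta} \frg_\alpha$, where the sum ranges over a single $\theta$-orbit of $T$-roots, and so a small case analysis on whether the roots in that orbit lie in $w\Phi^-$ settles the containment. This is the only place where structure theory of the symmetric pair really intervenes; everything else is bookkeeping with root-space decompositions together with the standard fact, already used implicitly in setting up the problem, that the top $S$-equivariant Chern class of an $S$-representation is the product of its weights.
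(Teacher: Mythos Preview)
Your approach is exactly the paper's: the proposition is just the conclusion of the tangent-space computation in the paragraphs immediately preceding it, and you in fact supply more justification than the paper does for the assertion that the $S$-weights on $\frk\cap\text{Ad}(w)(\frb)$ are $\Phi_K\cap\rho(w\Phi^-)$.

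One point to make explicit in your case analysis: the criterion ``$\frk_\beta\subset\text{Ad}(w)(\frb)$ if and only if $\beta\in\rho(w\Phi^-)$'' only holds if the $\theta$-orbit of $T$-roots lying over $\beta$ falls entirely inside $w\Phi^-$ or entirely inside $w\Phi^+$. If the orbit were split between the two, $\frk_\beta$ (being the diagonal $\theta$-fixed line) would project nontrivially to some $\frg_\alpha$ with $\alpha\in w\Phi^+$, hence would not lie in $\text{Ad}(w)(\frb)$, yet $\beta$ would still belong to $\rho(w\Phi^-)$. What rules this out is $\theta$-stability of $\text{Ad}(w)(\frb)$ itself, not merely of $\frb$: since $wB$ lies in a \emph{closed} $K$-orbit, the Borel $wBw^{-1}$ is $\theta$-stable (concretely in the cases at hand, the $S$-fixed points in closed orbits are signed permutations, which are fixed by the action $w\mapsto w_0ww_0$ of $\theta$ on $W$), so $\theta(w\Phi^\pm)=w\Phi^\pm$ and your case analysis collapses to the two clean cases. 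Once you insert this observation the argument is complete.
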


Knowing this, the goal is then to describe the closed orbits, as well as the $S$-fixed points contained in each.  The number of closed orbits is known in each of the examples we consider in this paper, see \cite{Richardson-Springer-90,Richardson-Springer-92}.  As explained in \cite[\S 1.3]{Wyser-Thesis}, given a concrete realization of $G$ and $K$ and using basic results from those same references, it is also easy to determine a fixed-point contained in each.

Given that $K \cdot wB$ is a closed orbit, the remaining $S$-fixed points contained in that orbit are of the form $w'w$, with $w' \in W_K$, viewed as an element of $W$ via the inclusion of Weyl groups $W_K \hookrightarrow W$.  Note that it is not completely obvious that $W_K$ is a subgroup of $W$ in the event that $S \subsetneq T$ (as is the case in the examples we consider here), since it is not \textit{a priori} clear that $N_K(S)$ is a subgroup of $N_G(T)$.  That it is follows from that fact that $T$ can be recovered as $Z_G(S)$, the centralizer of $S$ in $G$ (see \cite{Springer-85,Brion-99}).  Since any element of $G$ normalizing $S$ must also normalize $Z_G(S) = T$, we have an inclusion $N_K(S) \subset N_G(T)$.  This gives a map $W_K = N_K(S)/S \rightarrow N_G(T)/T = W$ defined by $nS \mapsto nT$.  The kernel of this map is $\{nS \mid n \in N_K(S) \cap T\}$.  Since $S = K \cap T$, the group $N_K(S) \cap T$ is simply $S$:
\[ N_K(S) \cap T = N_K(S) \cap (T \cap K) = N_K(S) \cap S = S. \]
Thus the kernel of the map $W_K \rightarrow W$ is $\{1\}$, and so it is an inclusion.

\subsection{Other Orbits}\label{ssec:other_orbits}
As alluded to in the previous section, the idea is to compute formulas for classes of all $K$-orbit closures using formulas for the closed orbits as a starting point for applying divided difference operators.  This works because the closed orbits are minimal with respect to the ``weak order" on $K \backslash G/B$ (\cite[Theorem 4.6]{Richardson-Springer-90}).  We now describe the weak ordering, and how divided difference operators enter the picture.  Let $\ga \in \Delta$ be a simple root, and let $P_{\ga}$ be the minimal parabolic subgroup of $G$ of type $\ga$ containing $B$.  Consider the canonical map
\[ \pi_{\ga}: G/B \rightarrow G/P_{\ga}. \]
This is a $\mathbb{P}^1$-bundle.  Letting $Q \in K \backslash G/B$ be given, consider the set $Z_{\ga}(Q):=\pi_{\ga}^{-1}(\pi_{\ga}(Q))$.  The map $\pi_{\ga}$ is $K$-equivariant, so $Z_{\ga}(Q)$ is $K$-stable.  Assuming $K$ is connected, $Z_{\ga}(Q)$ is also irreducible, so it has a dense $K$-orbit.  In the event that $K$ is disconnected, one sees that the component group of $K$ acts transitively on the irreducible components of $Z_{\ga}(Q)$, and from this it again follows that $Z_{\ga}(Q)$ has a dense $K$-orbit.

If $\text{dim}(\pi_{\ga}(Q)) < \text{dim}(Q)$, then the dense orbit on $Z_{\ga}(Q)$ is $Q$ itself.  However, if $\text{dim}(\pi_{\ga}(Q)) = \text{dim}(Q)$, the dense $K$-orbit will be another orbit $Q'$ of one dimension higher.  In either event, using notation as in \cite{McGovern-Trapa-09}, we make the following definition:
\begin{definition}\label{def:weak_order_dot_notation}
With notation as above, $s_{\ga} \cdot Q$ shall denote the dense $K$-orbit on $Z_{\ga}(Q)$.  In the event that $\ga = \ga_i$ for some chosen ordering of the simple roots, if $s_{\ga_i} \cdot Q = Q' \neq Q$, we will also use the notation $Q <_i Q'$ for brevity.
\end{definition}

\begin{definition}
The partial ordering on $K \backslash G/B$ generated by relations of the form $Q < Q'$ if and only if $Q' = s_{\ga} \cdot Q$ (with $\dim(Q') = \dim(Q) + 1$) for some $\ga \in \Delta$ is referred to as the \textbf{weak closure order}, or simply the \textbf{weak order}.
\end{definition}

Let $Y,Y'$ denote the closures of $Q,Q'$, respectively.  Assume that $Q' = s_{\ga} \cdot Q$, and define an operator $\partial_{\ga}$ on $H_S^*(X)$, known as a ``divided difference operator" or a ``Demazure operator",  as follows:
\[ \partial_{\ga}(f) = \frac{f - s_{\ga}(f)}{\ga}. \]

Let $d$ denote the degree of $\pi_{\ga}|_Y$ over its image.  Using standard facts from intersection theory, along with the fact that $\partial_{\ga} = \pi_{\ga}^* \circ (\pi_{\ga})_*$, it is easy to see that $[Y'] = \frac{1}{d} \partial_{\ga}([Y])$.

Putting all of this together, we see that we can recursively determine formulas for the equivariant classes of all orbit closures given the following data:
\begin{enumerate}
	\item Formulas for classes of the closed orbits.
	\item The weak closure order on $K \backslash G/B$.
	\item For any two orbits $Q,Q'$, with closures $Y, Y'$, and with the property that $Q' = s_{\ga} \cdot Q$, the degree $d$ of $\pi_{\ga}|_Y$ over its image.
\end{enumerate}

In fact, the aforementioned degree $d$ is always either $1$ or $2$, as follows from the exposition of \cite[Section 4]{Richardson-Springer-90}.  Namely, $s_{\ga} \cdot Q \neq Q$ only in cases where $\ga$ is a ``complex" or ``non-compact imaginary" root for the orbit $Q$, and the degree $d$ is $2$ if and only if $\ga$ is ``non-compact imaginary type II".  In all other cases, the degree is $1$.  In our examples here, this can all be boiled down to elementary combinatorics involving only subsets of the Weyl group, so we do not discuss the more general picture here.  The interested reader may see \cite{Richardson-Springer-90,Richardson-Springer-92} for more details.

In \cite{Brion-01}, the graph for the weak order on $K$-orbit closures is endowed with additional data, as follows:  If $Y' = s_{\ga} \cdot Y \neq Y$, then the directed edge originating at $Y$ and terminating at $Y'$ is labelled by the simple root $\ga$, or perhaps by an index $i$ if $\ga = \ga_i$ for some predetermined ordering of the simple roots.  Additionally, if the degree of $\pi_{\ga}|_Y$ is $2$, then this edge is double.  (In other cases, the edge is simple.)  We modify this convention as follows:  Rather than use simple and double edges, in our diagrams we distinguish the degree two covers by blue edges, as opposed to the usual black.  (We do this simply because our weak order graphs were created using GraphViz, which does not, as far as the author can ascertain, have a mechanism for creating a reasonable-looking double edge.  On the other hand, coloring the edges is straightforward.)

\section{Examples}
$G$ will be the special linear group, consisting of determinant-$1$ invertible matrices with complex entries.

For a maximal torus $T$ of $G$, let $Y_i$ denote coordinates on $\frt=\text{Lie}(T)$, so that 
\[ \Phi = \{X_i - X_j \ \vert \ i \neq j \}. \]
We choose the ``standard" positive system 
\[ \Phi^+ = \{X_i - X_j \ \vert \ i < j \}, \]
and let $\Phi^- = -\Phi^+$.  Take $B$ to be the Borel subgroup containing $T$ and whose roots are $\Phi^-$.  (Concretely, we may take $T$ to be the diagonal elements of $G$, and $B$ to be the lower-triangular elements of $G$.  Then $\frt$ is the set of all trace-zero diagonal matrices, and $X_i(\text{diag}(a_1,\hdots,a_n)) = a_i$ for each $i$.)

In this case, the Weyl group $W$ is isomorphic to the symmetric group, and elements of $W$ act on the coordinates $X_i$ by permutation of the indices.

\subsection{$K \cong SO(2n+1,\C)$}
We realize $K = SO(2n+1,\C)$ as the subgroup of $G=SL(2n+1,\C)$ preserving the quadratic form given by the antidiagonal matrix $J = J_{2n+1}$.  That is, $K = G^{\theta}$ where $\theta$ is the involution
\[ \theta(g) = J(g^{-1})^tJ. \]

We remark that in the notation of the introduction, this choice of $K$ corresponds to the real form $G_\R = SL(2n+1,\R)$ of $G$.

This realization of $K$ is in fact conjugate to the ``usual" one, that being the fixed point set of the involution $\theta'(g) = (g^{-1})^t$.  We prefer our choice of realization because we can take a maximal torus $S = K \cap T$ consisting of diagonal elements, and a Borel subgroup $B$ consisting of lower-triangular elements.

The torus $\frs = \text{Lie}(S)$ has the form $\text{diag}(a_1,\hdots,a_n,0,-a_n,\hdots,-a_1)$.  Thus if $X_1,\hdots,X_{2n+1}$ represent coordinates on $\frt$, restricting to $\frs$ we have $\rho(X_{n+1}) = 0$, and $\rho(X_i) = Y_i$, $\rho(X_{2n+2-i}) = -Y_i$ for $i=1,\hdots,n$.

The roots of $K$ are as follows:  
\begin{itemize}
	\item $\pm Y_i$ ($i = 1,\hdots,n$)
	\item $\pm (Y_i + Y_j)$ ($1 \leq i < j \leq n)$
	\item $\pm (Y_i - Y_j)$ ($1 \leq i < j \leq n)$
\end{itemize}

The Weyl group $W_K$ of $K$ should be thought of as consisting of signed permutations of $\{1,\hdots,n\}$ (changing any number of signs).  This is the action of $W_K$ on the coordinates $Y_i \in \frs^*$.  $W_K$ is embedded in $W$ as described in Subsection \ref{ssec:notation}.

\subsubsection{A Formula for the Closed Orbit}
As it turns out, there is a unique closed orbit in this case.  In our chosen realization, it is the orbit $K \cdot 1B$, and, by the straightforward remarks given at the end of Subsection \ref{ssec:closed_orbits}, it contains the $S$-fixed points corresponding to elements of $W_K$, embedded in $S_{2n+1}$ as we have just mentioned.

We give a formula for the $S$-equivariant class of the lone closed orbit.

\begin{prop}\label{prop:formula_for_SO_odd}
Let $Q=K \cdot 1B$ be the closed $K$-orbit of the previous proposition.  Then $[Q]$ is represented by
\[ P(x,y) := (-2)^n \displaystyle\prod_{i=1}^n (x_i + x_{n+1})(x_{n+1} + x_{2n+2-i})\displaystyle\prod_{1 \leq i < j \leq n}(x_i + x_j)(x_i + x_{2n+2-j}). \]
\end{prop}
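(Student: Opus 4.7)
The plan is to apply equivariant localization: since $H_S^*(X)$ is free over $\Lambda_S$ (Proposition~\ref{prop:eqvt-cohom-flag-var}), the restriction map $H_S^*(X) \to \bigoplus_{w \in W} \Lambda_S$ to the $S$-fixed locus is injective, so it suffices to verify $P|_w = [Q]|_w$ for every $w \in W = S_{2n+1}$. By Proposition~\ref{prop:s-fixed-points} and the discussion in~\S\ref{ssec:closed_orbits}, the $S$-fixed points of $G/B$ are indexed by $W$, and the ones lying in $Q = K \cdot 1B$ are exactly $\{wB : w \in W_K\}$, with $W_K$ viewed as the set of signed permutations embedded in $S_{2n+1}$ as in~\S\ref{ssec:notation}. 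For $w \notin W_K$ one has $wB \notin Q$, hence $[Q]|_w = 0$; for $w \in W_K$, $[Q]|_w$ equals the product of the $S$-weights on $N_Q X|_{wB}$ computed by Proposition~\ref{prop:restriction-of-closed-orbit}.

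I would first verify $P|_e = [Q]|_e$ directly. By Proposition~\ref{prop:restriction-maps}, restriction at $w = e$ sends $x_i \mapsto Y_i$ for $i \le n$, $x_{n+1} \mapsto 0$, and $x_{2n+2-i} \mapsto -Y_i$ for $i \le n$, so a direct expansion gives
\[
P|_e = 2^n \prod_{i=1}^n Y_i^2 \prod_{1\le i<j\le n}(Y_i^2 - Y_j^2).
\]
The weights of $N_Q X|_{eB}$, per Proposition~\ref{prop:restriction-of-closed-orbit}, consist of each positive root of $K$ (the $Y_i$ and the $Y_i \pm Y_j$ for $i<j$) once, together with the non-root weights $2Y_1, \dots, 2Y_n$ arising as $\rho(X_i - X_{2n+2-i})$; their product coincides with $P|_e$.

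For general $w \in W_K$ corresponding to a signed permutation $\pi$ with $|\pi(i)| = a_i$ and $\epsilon_i = \operatorname{sign}(\pi(i))$, one has $\rho(X_{w(i)}) = \epsilon_i Y_{a_i}$ and $\rho(X_{w(2n+2-i)}) = -\epsilon_i Y_{a_i}$. Expanding $P|_w$ factor-by-factor exactly as in the $e$ case yields
\[
P|_w = 2^n \operatorname{sign}(a) \prod_{i=1}^n Y_i^2 \prod_{i<j}(Y_i^2 - Y_j^2),
\]
where $\operatorname{sign}(a)$ is the sign of the permutation $i \mapsto a_i$. A parallel computation of $[Q]|_w$, applying $w$ to the weights of $N_Q X|_{eB}$ and tracking the signs produced when $w$ sends certain positive roots of $K$ into $\Phi_K^-$, yields the same polynomial; thus $P|_w = [Q]|_w$ for all $w \in W_K$.

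The main technical obstacle is showing $P|_w = 0$ for every $w \notin W_K$. A linear factor $(x_a + x_b)$ of $P$ restricts at $wB$ to $\rho(X_{w(a)}) + \rho(X_{w(b)})$, which vanishes exactly when $w(a) + w(b) = 2n+2$, i.e., when $\{w(a), w(b)\}$ is a ``$K$-pair'' $\{k, 2n+2-k\}$. I would exhibit such a vanishing factor via a case analysis on $w$: (i) if $w(n+1) \ne n+1$, setting $k = w(n+1)$ and $b = w^{-1}(2n+2-k)$ produces the factor $(x_b + x_{n+1})$, present in $P$ because all pairs involving $n+1$ appear, whose image $\{k, 2n+2-k\}$ is a $K$-pair; (ii) if $w(n+1) = n+1$ and $A := \{w(1),\dots,w(n)\}$ contains both elements of some $K$-pair, say $\{w(i), w(j)\}$ with $i < j \le n$, then the factor $(x_i + x_j)$ from the $\prod_{i<j\le n}(x_i + x_j)$ block vanishes; (iii) if $w(n+1) = n+1$ and $A$ is a transversal of the $K$-pairs (one element from each) while $w \notin W_K$, then the bijections $\mu, \nu : \{1,\dots,n\} \to \{1,\dots,n\}$ recording the pair-index of $w(i)$ and $w(2n+2-i)$ respectively are distinct; the nontrivial permutation $\sigma = \nu^{-1}\mu$ has some $i$ with $j := \sigma(i) > i$, and the factor $(x_i + x_{2n+2-j})$ from the $\prod_{i<j\le n}(x_i + x_{2n+2-j})$ block then vanishes (since $w(i)$ and $w(2n+2-j)$ lie in the same $K$-pair but opposite halves). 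These three subcases exhaust all $w \notin W_K$, completing the verification.
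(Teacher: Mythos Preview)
Your argument is correct and follows the same localization strategy as the paper: compute $[Q]|_w$ via Proposition~\ref{prop:restriction-of-closed-orbit} for $w\in W_K$, verify $P|_w$ matches, and show $P|_w=0$ for $w\notin W_K$ by exhibiting a vanishing linear factor. Your sign bookkeeping via the Vandermonde identity $\prod_{i<j}(Y_{a_i}^2-Y_{a_j}^2)=\operatorname{sign}(a)\prod_{i<j}(Y_i^2-Y_j^2)$ is a clean shortcut for what the paper does by its four-subcase inversion count (note $\operatorname{sign}(a)=(-1)^{l(|w|)}$, so the two answers agree).

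Your three-way split of the vanishing argument is in fact slightly more careful than the paper's two-case version. The paper's Case~2 (``$w$ fixes $n+1$'') picks any $i\le n$ with $w(2n+2-i)\neq 2n+2-w(i)$ and asserts $x_i+x_k$ is a factor of $P$; but not every such $i$ works (e.g.\ $n=3$, $w=2134567$, $i=2$ gives $k=7$, and $x_2+x_7$ is not a factor). Your subcases (ii)/(iii) avoid this by first checking whether $\{w(1),\dots,w(n)\}$ already contains a full $K$-pair, and only in the transversal case invoking the permutation $\sigma=\nu^{-1}\mu$ to locate a pair $(i,2n+2-j)$ with $i<j\le n$, which is guaranteed to index a factor of $P$. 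That refinement is the right fix.
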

\begin{proof}
We apply Proposition \ref{prop:restriction-of-closed-orbit} to determine the restriction $[Q]|_w$ at a fixed point $w \in Q$.  To compute the set $\rho(w \Phi^+)$, we determine the restrictions of the positive roots $\Phi^+$ to $\frs$, then apply the signed permutation corresponding to $w$ to that set of weights.  (The result is the same as if we viewed $w$ as a signed element of $S_{2n+1}$, applied that permutation to the elements of $\Phi^+$, and \textit{then} restricted the resulting roots to $\frs$.)

Restricting the positive roots $\{X_i - X_j \ \vert \ 1 \leq i < j \leq 2n+1 \}$ to $\frs$, we get the following set of weights:

\begin{enumerate}
	\item $Y_i - Y_j$, $1 \leq i < j \leq n$, each with multiplicity 2 (one is the restriction of $X_i - X_j$, the other the restriction of $X_{2n+2-j} - X_{2n+2-i}$)
	\item $Y_i + Y_j$, $1 \leq i < j \leq n$, each with multiplicity 2 (one is the restriction of $X_i - X_{2n+2-j}$, the other the restriction of $X_j - X_{2n+2-i}$)
	\item $Y_i$, $1 \leq i \leq n$, each with multiplicity 2 (one is the restriction of $X_i - X_{n+1}$, the other the restriction of $X_{n+1} - X_{2n+2-i}$)
	\item $2Y_i$, $1 \leq i \leq n$, each with multiplicity 1 (the restriction of $X_i- X_{2n+2-i}$)
\end{enumerate}

Now, consider applying a signed permutation $w$ to this set of weights.  The resulting set of weights will be

\begin{enumerate}
	\item For each $i,j$ ($1 \leq i < j \leq n$), either $Y_i - Y_j$ or $-(Y_i - Y_j)$, occurring with multiplicity 2 (these weights come from applying $w$ to weights of either type (1) or (2) above);
	\item For each $i,j$ ($1 \leq i < j \leq n$), either $Y_i + Y_j$ or $-(Y_i+Y_j)$, occurring with multiplicity 2 (these weights also come from applying $w$ to weights of either type (1) or (2) above);
	\item For each $i$ ($1 \leq i \leq n$), either $Y_i$ or $-Y_i$, occurring with multiplicity 2 (these weights come from applying $w$ to weights of type (3) above);
	\item For each $i$ ($1 \leq i \leq n$), either $2Y_i$ or $-2Y_i$, ocurring with multiplicity 1 (these weights come from applying $w$ to weights of type (4) above).
\end{enumerate}

Discarding roots of $K$, we are left with the following weights:

\begin{enumerate}
	\item For each $i,j$ ($1 \leq i < j \leq n$), either $Y_i - Y_j$ or $-(Y_i-Y_j)$, occurring with multiplicity 1;
	\item For each $i,j$ ($1 \leq i < j \leq n$), either $Y_i + Y_j$ or $-(Y_i+Y_j)$, occurring with multiplicity 1;
	\item For each $i$ ($1 \leq i \leq n$), either $Y_i$ or $-Y_i$, occurring with multiplicity 1;
	\item For each $i$ ($1 \leq i \leq n$), either $2Y_i$ or $-2Y_i$, occurring with multiplicity 1.
\end{enumerate}

It is clear that the number of weights of the form $-Y_i$ and the number of weights of the form $-2Y_i$ are the same, so weights of those two forms account for an even number of negative signs.  So in computing the restriction, to get the sign right, we need only concern ourselves with the signs of the weights of types (1) and (2) above.

We claim that the number of $Y_i \pm Y_j$ ($i < j$) occurring with a negative sign is congruent mod $2$ to $l(|w|)$.  (Cf. Subsection \ref{ssec:notation} for this notation.)  Indeed, suppose first that $|w|$ does not invert $i$ and $j$, so that $k = |w(i)| < |w(j)| = l$.  Then there are four possibilities:

\begin{enumerate}
	\item $w(i)$, $w(j)$ are both positive.  In this case, $Y_{w(i)} + Y_{w(j)} = Y_k + Y_l$, and $Y_{w(i)} - Y_{w(j)} = Y_k - Y_l$.  Neither of these is a negative root.
	\item $w(i)$ is negative, and $w(j)$ is positive.  Then $Y_{w(i)} + Y_{w(j)} = -(Y_k - Y_l)$, and $Y_{w(i)} - Y_{w(j)} = -(Y_k + Y_l)$.  Both of these are negative roots.
	\item $w(i)$ is positive, and $w(j)$ is negative.  Then $Y_{w(i)} + Y_{w(j)} = Y_k - Y_l$, and $Y_{w(i)} - Y_{w(j)} = Y_k + Y_l$.  Neither of these is a negative root.
	\item $w(i)$, $w(j)$ are both negative.  Then $Y_{w(i)} + Y_{w(j)} = -(Y_k + Y_l)$, and $Y_{w(i)} - Y_{w(j)} = -(Y_k - Y_l)$.  Both of these are negative roots.
\end{enumerate}

All this is to say that if $|w|$ does not invert $i$ and $j$, then this accounts for an even number of negative signs occurring in the restriction.  On the other hand, if $|w|$ does invert $i$ and $j$, so that $k = |w(j)| < |w(i)| = l$, then again there are four possibilities:

\begin{enumerate}
	\item $w(i)$, $w(j)$ are both positive.  In this case, $Y_{w(i)} + Y_{w(j)} = Y_k + Y_l$, and $Y_{w(i)} - Y_{w(j)} = -(Y_k - Y_l)$.  One of these is a negative root.
	\item $w(i)$ is negative, and $w(j)$ is positive.  Then $Y_{w(i)} + Y_{w(j)} = Y_k - Y_l$, and $Y_{w(i)} - Y_{w(j)} = -(Y_k + Y_l)$.  One of these is a negative root.
	\item $w(i)$ is positive, and $w(j)$ is negative.  Then $Y_{w(i)} + Y_{w(j)} = -(Y_k - Y_l)$, and $Y_{w(i)} - Y_{w(j)} = Y_k + Y_l$.  One of these is a negative root.
	\item $w(i)$, $w(j)$ are both negative.  Then $Y_{w(i)} + Y_{w(j)} = -(Y_k + Y_l)$, and $Y_{w(i)} - Y_{w(j)} = Y_k - Y_l$.  One of these is a negative root.
\end{enumerate}

The upshot is that if $w \in Q$ is an $S$-fixed point, then

\[ [Q]|_w = F(Y) := (-1)^{l(|w|)} 2^n \displaystyle\prod_{i = 1}^n Y_i^2 \displaystyle\prod_{1 \leq i < j \leq n} (Y_i + Y_j)(Y_i-Y_j). \]

So we seek a polynomial in $x_1,\hdots,x_{2n+1},y_1,\hdots,y_n$, say $p$, with the property that

\[ p(\rho(wX),Y) = 
\begin{cases}
F(Y) & \text{ if $w \in W_K$} \\
0 & \text{ otherwise.}
\end{cases} \]

It is straightforward to check that $P(x,y)$ has these properties.  Indeed, suppose first that $w \in W_K$.  (We should think of $w$ here as a signed element of $S_{2n+1}$, since this is how $w$ acts on the $X_i$.)  Consider first the factors $x_i + x_{n+1}$ and $x_{n+1}+x_{2n+2-i}$ for $i=1,\hdots,n$.  Supposing $w(i) \leq n$, $X_i + X_{n+1}$ gives $X_{w(i)} + X_{n+1}$, which restricts to $Y_{w(i)} + 0 = Y_{w(i)}$.  On the other hand, $X_{n+1} + X_{2n+2-i}$ gives $X_{n+1} + X_{w(2n+2-i)}$, which restricts to $0 - Y_{w(i)} = -Y_{w(i)}$, so the product $(x_i + x_{n+1})(x_{n+1}+x_{2n+2-i})$ restricts to $-Y_{w(i)}^2$.  If $w(i) > n+1$, then the product of these two terms restricts to $-Y_{2n+2-w(i)}^2$, with the negative term coming from $x_i + x_{n+1}$, and the positive term coming from the $x_{n+1} + x_{2n+2-i}$.  As $i$ runs from $1$ to $n$, the product of all these terms restricts to $(-1)^n \prod_{i=1}^n Y_i^2$.  This explains the factor of $(-2)^n$ in our formula, as opposed to just $2^n$.  The $(-1)^n$ is to account for a possible sign flip coming from terms of this type.  So the terms $(-2)^n \prod_{i=1}^n (x_i + x_{n+1})(x_{n+1} + x_{2n+2-i})$ of our putative formula contribute the $2^n \prod_{i=1}^n Y_i^2$ portion of the required restriction.

Next, consider the terms $x_i + x_j$ and $x_i + x_{2n+2-j}$.  Applying $w$ and restricting, these give (up to sign) all required terms of the form $Y_i + Y_j$ and $Y_i - Y_j$ ($i<j$).  Writing each such term as either $+1$ or $-1$ times a positive root by factoring out negative signs as necessary, we effectively introduce the sign of $(-1)^{l(|w|)}$, as required.

On the other hand, given any $w \notin W_K$ (i.e. a \textit{non-signed} element of $S_{2n+1}$), there are two possibilities:

\textit{Case 1:  $w$ does not fix $n+1$}

In this case, $w$ moves $n+1$ to some $i$ such that $1 \leq i \leq 2n+1$, and $i \neq n+1$.  Let $j = w^{-1}(2n+2-i)$.  (Note, of course, that $j \neq n+1$.)  Applying $w$ to $x_j + x_{n+1}$, we get $X_{2n+2-i} + X_i$, which restricts to $0$.

\textit{Case 2:  $w$ fixes $n+1$}

In this case, $w(2n+2-i) \neq 2n+2-w(i)$ for some $1 \leq i \leq n$.  Let $j = 2n+2-w(i)$, and let $k = w^{-1}(j)$.  Clearly, $k \neq i$, $2n+2-i$, or $n+1$, so the factor $x_i + x_k$ appears in $P$.  Applying $w$ to this factor gives $X_{w(i)} + X_{2n+2-w(i)}$, which restricts to zero.

We see that in either case, applying $w$ then restricting kills one of the factors appearing in $P$, and so the result is zero for any $w \notin W_K$, as desired.  This completes the proof.
\end{proof}

\begin{remark}
An alternate representative of $[Q]$ is
\[ P(x,y) := (-2)^n \displaystyle\prod_{i=1}^n (x_{n+1}+y_i)(x_{n+1}-y_i)\displaystyle\prod_{1 \leq i < j \leq n}(x_i + x_j)(x_i + x_{2n+2-j}). \]
Indeed, this was the first representative discovered by the author.  However, the representative of the previous proposition is preferable from our perspective, essentially because a formula involving only the $x_i$ will pull back to a Chern class formula for the class of a certain degeneracy locus.  It is not clear that the representative involving the $y_i$ should have such an interpretation.
\end{remark}

\subsubsection{Parametrization of $K \backslash G/B$ and the Weak Order}\label{ssec:kgb_param_so_odd}
As described in \cite[Examples 10.2,10.3]{Richardson-Springer-90}, the $K$-orbits in this case are in bijection with the set of ``twisted involutions".  This is a subset of $W$, defined in \cite{Richardson-Springer-90}.  In this particular case, the twisted involutions turn out to be in bijection with the ordinary involutions of $W$.  (The passage from twisted involutions to honest involutions is accomplished by left-multiplication by the long element $w_0$.)  Thus the $K$-orbits on $G/B$ are parametrized by involutions of $W$.

Relative to this parametrization, the lone closed orbit $Q$ corresponds to the long element $w_0=(1,2n+1)(2,2n)\hdots(n,n+2)$, and the weak order graph is generated from this starting point by the following rules:  Given an involution $b \in W$, with $Q_b$ the corresponding $K$-orbit,
\begin{enumerate}
	\item If $l(s_ib) > l(b)$, then $s_i \cdot Q_b = Q_b$.
	\item Else, if $s_ibs_i \neq b$, then $Q_b <_i Q_{s_ibs_i}$, and the edge in the weak order graph is black.
	\item Else, $Q_b <_i Q_{s_ib}$, and the edge in the weak order graph is blue.
\end{enumerate}

This description of the weak order follows from some basic results of \cite{Richardson-Springer-90}, combined with straightforward combinatorial arguments.  We omit the details here, but the interested reader can find them in \cite[\S 2.2.2]{Wyser-Thesis}.

The parametrization of $K \backslash G/B$ by involutions is convenient because an involution $b \in W$ encodes a linear algebraic description of the orbit corresponding to $b$ in a straightforward way.  Namely, given an involution $b$, define, for any $i$ and $j$,
\[ r_b(i,j) := \#\{ k \leq i \ \vert \ b(k) \leq j \}. \]
Let $V = \C^{2n+1}$, and let $\gamma: V \otimes V \rightarrow \C$ denote the orthogonal form with isometry group $K$.  For any flag $F_{\bullet} = (F_1 \subset \hdots \subset F_{2n+1}) \in X$, denote by $\gamma|_{F_i \times F_j}$ the restriction of $\gamma$ to pairs of the form $(v,w)$ with $v \in F_i$ and $w \in F_j$.  The claim is that if $b \in W$ is an involution, then the set
\[ Q_b := \{ F_{\bullet} \in X \mid \text{rank}(\gamma|_{F_i \times F_j}) = r_b(i,j) \text{ for all } i,j \} \]
is a $K$-orbit on $G/B$, and that the association $b \mapsto Q_b$ defines a bijection between involutions in $W$ and $K$-orbits.

Since any permutation $w$ is uniquely determined by the set of numbers $r_w(i,j)$, it is clear by definition that the $Q_b$ are mutually disjoint.  It is also clear that each set $Q_b$, if non-empty, is stable under $K$ and hence is at least a union of $K$-orbits.  If we can see that every $Q_b$ is non-empty, it will follow that each must be a \textit{single} $K$-orbit.  Indeed, as mentioned above, we know by the results of \cite{Richardson-Springer-90} that the orbits are in bijection with the involutions of $W$.  If each $Q_b$ is non-empty, then it is impossible for any one of them to be anything other than a single $K$-orbit, for then there would be more $K$-orbits than involutions.

Thus we show that each set $Q_b$ is non-empty by producing an explicit representative satisfying the appropriate rank conditions.  It suffices to produce a basis $\{v_1,\hdots,v_{2n+1}\}$ for $\C^{2n+1}$ such that the matrix for the form $\gamma$ relative to this basis is a monomial matrix (that is, a matrix such that each row and column has exactly one non-zero entry) whose image in $W$ is $b$.  Then we can simply take our flag $F_{\bullet}$ to be $\left\langle v_1,\hdots,v_{2n+1} \right\rangle$.

We choose such a basis as follows.  (Recall that the form $\gamma$ is defined by $\left\langle e_i,e_j \right\rangle = \delta_{i,2n+2-j}$.)  First, for each $i$ such that $b(i) \neq i$, choose $v_i$ and $v_{b(i)}$ to be $e_k$ and $e_{2n+2-k}$ for some $k \neq n+1$.  (Of course, we should choose a different such $k$ for each such $i$.)   There are an odd number of $i$ such that $b(i) = i$ --- for one such $i$, choose $v_i$ to be $e_{n+1}$, and for all other pairs $i_1,i_2$ of such $i$, choose $v_{i_1}$ to be $e_k + e_{2n+2-k}$ for some $k \neq n+1$ (and not yet used in the first step above), and choose $v_{i_2}$ to be $e_k - e_{2n+2-k}$ for the same $k$.  (We should choose a different such $k$ for each such pair $i_1,i_2$.)

\begin{prop}
With $v_1,\hdots,v_{2n+1}$ defined as above, the flag $\left\langle v_1,\hdots,v_{2n+1} \right\rangle$ lies in $Q_b$.
\end{prop}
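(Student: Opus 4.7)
The plan is to show that the Gram matrix $M := (\gamma(v_a, v_c))_{a,c=1}^{2n+1}$ is a \emph{monomial} matrix (exactly one nonzero entry in each row and each column) whose nonzero entries lie precisely at the positions $(k, b(k))$ for $k = 1, \ldots, 2n+1$. Granting this, the restriction $\gamma|_{F_i \times F_j}$ is represented, in the bases $v_1, \ldots, v_i$ and $v_1, \ldots, v_j$, by the upper-left $i \times j$ submatrix of $M$; its rank is simply the number of nonzero entries of $M$ lying in that submatrix, which is $\#\{k \le i : b(k) \le j\} = r_b(i,j)$. The rank conditions defining $Q_b$ are thereby satisfied.

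To verify the claim about $M$, I would check each type of pair arising from the construction. When $\{i, b(i)\}$ is a $2$-cycle of $b$, the set $\{v_i, v_{b(i)}\}$ equals $\{e_k, e_{2n+2-k}\}$ for some $k \ne n+1$, so $\gamma(v_i, v_{b(i)}) = 1$ and $\gamma(v_i, v_i) = \gamma(v_{b(i)}, v_{b(i)}) = 0$. For the distinguished fixed point $i_0$ with $v_{i_0} = e_{n+1}$, one has $\gamma(v_{i_0}, v_{i_0}) = 1$. For a pair $\{i_1, i_2\}$ of fixed points with $v_{i_1} = e_k + e_{2n+2-k}$ and $v_{i_2} = e_k - e_{2n+2-k}$, a direct computation gives $\gamma(v_{i_1}, v_{i_1}) = 2$, $\gamma(v_{i_2}, v_{i_2}) = -2$, and $\gamma(v_{i_1}, v_{i_2}) = 0$, so the contributions to $M$ occur on the diagonal at $(i_1, i_1)$ and $(i_2, i_2)$, matching $b$ fixing both $i_1$ and $i_2$.

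The remaining step is pure bookkeeping: checking that vectors coming from distinct blocks of the construction are mutually $\gamma$-orthogonal. Since $\gamma(e_l, e_m) = \delta_{l, 2n+2-m}$, this follows at once from the stipulation that the index pairs $\{k, 2n+2-k\}$ used across the various blocks are pairwise disjoint and that $n+1$ is not used outside of the block containing $v_{i_0}$. The only place where one must be slightly careful is the fixed-point-pair case, where two basis vectors share the common index $k$ and could \emph{a priori} contribute off-diagonal entries to $M$; the use of a $+$ and a $-$ combination is precisely what forces the off-diagonal pairing to vanish while leaving nonzero diagonal entries. This confirms that $M$ is monomial with support $\{(k, b(k)) : 1 \le k \le 2n+1\}$, and the rank argument of the first paragraph then yields $F_\bullet \in Q_b$.
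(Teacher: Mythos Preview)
Your proposal is correct and follows essentially the same route as the paper: both arguments verify that the Gram matrix $(\gamma(v_a,v_c))$ is monomial with nonzero entries exactly at positions $(k,b(k))$, by checking the within-block pairings for $2$-cycles, the distinguished fixed point, and fixed-point pairs, together with the between-block orthogonality coming from disjointness of the index sets $\{k,2n+2-k\}$; and both then read off the rank of $\gamma|_{F_i\times F_j}$ as the number of such nonzero entries in the upper-left $i\times j$ block, namely $r_b(i,j)$.
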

\begin{proof}
We first note that the matrix for the form $\gamma$ relative to this basis is indeed a monomial matrix whose image in $W$ is $b$.  This means precisely that for each $i$, $\left\langle v_i,v_j \right\rangle$ is non-zero if and only if $j = b(i)$.

For any $i$ with $b(i) \neq i$, this is clear.  Indeed, $v_i = e_k$ for some $k$, while $v_{b(i)} = e_{2n+2-k}$.  Meanwhile, $e_k$ appears with coefficient $0$ in all other $v_i$ by design.  Since $\left\langle e_i,e_j \right\rangle = \delta_{i,2n+2-j}$, we see that $\left\langle v_i,v_j \right\rangle = \delta_{j,b(i)}$.

Now suppose that $b(i) = i$.  Then either $v_i = e_{n+1}$, or $v_i = e_k  \pm e_{2n+2-k}$ for some $k \neq n+1$ (and not equal to any $k$ used to define $v_i$ with $b(i) \neq i$).  In the former case, we have 
\[ \left\langle v_i,v_{b(i)} \right\rangle = \left\langle v_i,v_i \right\rangle = \left\langle e_{n+1},e_{n+1} \right\rangle = 1, \]
while $\left\langle v_i,v_j \right\rangle = 0$ for any other $j$, since $e_{n+1}$ appears with coefficient $0$ in all other $v_i$.  In the latter case, supposing that $v_i = e_k  + e_{2n+2-k}$, we have
\[ \left\langle v_i,v_{b(i)} \right\rangle = \left\langle v_i,v_i \right\rangle = \left\langle e_k,e_{2n+2-k} \right\rangle + \left\langle e_{2n+2-k},e_k \right\rangle) = 2. \]
If $v_i = e_k  - e_{2n+2-k}$, the corresponding computation shows that $\left\langle v_i,v_{b(i)} \right\rangle = -2$.

For $j \neq b(i)$, either $e_k$ appears with coefficient $0$ in $v_j$ (in which case $\left\langle v_i,v_j \right\rangle = 0$), or $v_j = e_k \mp e_{2n+2-k}$, and in that case,
\[ \left\langle v_i,v_j \right\rangle = \left\langle e_k,e_{2n+2-k} \right\rangle - \left\langle e_{2n+2-k},e_k \right\rangle) = 0. \]

This establishes that the matrix for $\gamma$ relative to the basis $\{v_i\}$ is indeed monomial, with image $b$ in $W$.  Now, note that if $F_{\bullet} = \left\langle v_1,\hdots,v_{2n+1} \right\rangle$, then $\text{rank}(\gamma|_{F_i \times F_j})$ is, by definition, the rank of the upper-left $i \times j$ rectangle of this matrix.  For any monomial matrix with image $b$ in $W$, the rank of the upper-left $i \times j$ rectangle is precisely $r_b(i,j)$.  This proves the claim.
\end{proof}

We illustrate with two examples.  Suppose $n = 2$, so we are dealing with $G = GL(5,\C)$, $K = O(5,\C)$.  First consider the involution $b=(2,4)$.  Since $b$ moves $2$ and $4$, we first choose $v_2 = e_1$ and $v_4 = e_5$.  Since $b$ fixes $1$, $3$, and $5$, we first choose $v_1 = e_3$, then we choose $v_3 = e_2+e_4$ and $v_5 = e_2 - e_4$.  Our ordered basis is thus
\[ \{ e_3,e_1,e_2+e_4,e_5,e_2-e_4 \}. \]
Relative to this ordered basis, the form $\gamma$ has matrix
\[ \begin{pmatrix}
1 & 0 & 0 & 0 & 0 \\
0 & 0 & 0 & 1 & 0 \\
0 & 0 & 2 & 0 & 0 \\
0 & 1 & 0 & 0 & 0 \\
0 & 0 & 0 & 0 & -2 \end{pmatrix}, \]
and one checks that if $F_{\bullet} = \left\langle v_1,\hdots,v_5 \right\rangle$, then the rank conditions specified by $b=(2,4)$ are satisfied.

Next, consider $b=(1,3)(2,5)$.  We first choose $v_1 = e_1$, $v_3 = e_5$, $v_2 = e_2$, and $v_5 = e_4$.  Finally, since $b$ fixes only $4$, we choose $v_4 = e_3$.  So our ordered basis is $\left\langle e_1,e_2,e_5,e_3,e_4 \right\rangle$, and the form $\gamma$, relative to this basis, has matrix
\[ \begin{pmatrix}
0 & 0 & 1 & 0 & 0 \\
0 & 0 & 0 & 0 & 1 \\
1 & 0 & 0 & 0 & 0 \\
0 & 0 & 0 & 1 & 0 \\
0 & 1 & 0 & 0 & 0 \end{pmatrix}. \]
One again checks easily that if $F_{\bullet} = \left\langle v_1,\hdots,v_5 \right\rangle$, the rank conditions encoded by $b = (1,3)(2,5)$ are satisfied.

Using the linear algebraic description of $K$-orbits as sets of flags, it is also easy to describe $K$-orbit \textit{closures} as sets of flags.  This will be useful to us in Section \ref{sec:deg-loci}, when we will realize the $K$-orbit closures as universal cases of degeneracy loci bearing similar linear algebraic descriptions.  Indeed, the result is the following:

\begin{prop}\label{prop:ortho-orbit-closures}
Suppose that $b \in S_{2n+1}$ is an involution, with $Q_b$ the associated $K$-orbit.  Then
\[ \overline{Q_b} = \{F_{\bullet} \mid \text{rank}(\gamma|_{F_i \times F_j}) \leq r_b(i,j) \text{ for all } i,j \} \]
\end{prop}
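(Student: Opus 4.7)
Let $R_b$ denote the set on the right-hand side of the claimed equality. The plan is to prove the two inclusions $\overline{Q_b} \subseteq R_b$ and $R_b \subseteq \overline{Q_b}$ separately. The easier inclusion follows by observing that for each pair $(i,j)$, the condition $\text{rank}(\gamma|_{F_i \times F_j}) \leq r_b(i,j)$ defines a Zariski-closed subset of $X$: picking local bases of the flag components, this condition is expressed as the vanishing of the $(r_b(i,j)+1) \times (r_b(i,j)+1)$ minors of the matrix representing the restricted form. Hence $R_b$ is closed, and since $Q_b \subseteq R_b$ trivially, taking closures yields $\overline{Q_b} \subseteq R_b$.

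For the reverse inclusion, I use the fact that the orbits $Q_{b'}$, indexed by involutions $b' \in S_{2n+1}$, partition $X$. Any flag $F_\bullet \in R_b$ lies in a unique such orbit $Q_{b'}$, and the membership in $R_b$ is equivalent to $r_{b'}(i,j) \leq r_b(i,j)$ for all $i,j$. Writing $b' \leq_{\text{rk}} b$ for this system of inequalities, we obtain the decomposition
\[
R_b \;=\; \bigsqcup_{b' \leq_{\text{rk}} b} Q_{b'},
\]
so it suffices to show that $b' \leq_{\text{rk}} b$ implies $Q_{b'} \subseteq \overline{Q_b}$.

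I would prove this by induction on $d := \dim Q_b - \dim Q_{b'}$. The base case $d = 0$ forces the two rank tables to coincide, hence $b' = b$, since an involution is determined by its rank table. For $d > 0$, the heart of the argument is to produce an intermediate involution $b''$ and a simple reflection $s_i$ such that $b' \leq_{\text{rk}} b'' \leq_{\text{rk}} b$, $\dim Q_{b''} = \dim Q_{b'} + 1$, and $Q_{b'} <_i Q_{b''}$ in the weak order of Subsection \ref{ssec:other_orbits}. Granting this, the weak-order cover yields $Q_{b'} \subseteq \overline{Q_{b''}}$, while the inductive hypothesis applied to the pair $(b'',b)$ gives $Q_{b''} \subseteq \overline{Q_b}$; composition then yields $Q_{b'} \subseteq \overline{Q_b}$, closing the induction.

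The main obstacle is constructing the intermediate $b''$ at each inductive step, which is the analog of Ehresmann's theorem expressing the Bruhat order on $S_n$ via rank inequalities, adapted here to involutions and to the rank function of the bilinear form $\gamma$. I would address it by direct combinatorics: given $b' <_{\text{rk}} b$, I would locate a pair $(i,j)$ where the inequality is strict, translate this to an explicit descent of $|b'|$ relative to $|b|$, and use this to identify the simple reflection $s_i$ whose weak-order cover $b'' = s_i b' s_i$ (or $s_i b'$ in a type II situation) satisfies the required squeezing between $b'$ and $b$. Alternatively, one can appeal to the general results of Richardson and Springer (\cite{Richardson-Springer-90, Richardson-Springer-92}), where closure orders on symmetric-orbit stratifications of flag varieties are characterized in terms analogous to the Ehresmann rank criterion.
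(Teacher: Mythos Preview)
Your overall framework is sound: the easy inclusion $\overline{Q_b}\subseteq R_b$ is correct, and decomposing $R_b$ into orbits $Q_{b'}$ with $b'\le_{\mathrm{rk}} b$ correctly reduces the problem to showing that the rank order on involutions coincides with the closure order on $K$-orbits. However, your inductive route through the \emph{weak} order has a real gap. The weak order is in general strictly coarser than the full closure order, so the existence of a weak-order cover $b''$ of $b'$ with $b''\le_{\mathrm{rk}} b$ is not automatic; it amounts to a lifting property of the weak order relative to Bruhat order on involutions, which is itself a nontrivial statement that your ``direct combinatorics'' sketch does not establish. (A minor point: the notation $|b|$, $|b'|$ is out of place here, since $b,b'$ are ordinary involutions in $S_{2n+1}$, not signed permutations.)

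The paper's proof bypasses the induction entirely and is much shorter. It simply invokes the result of Richardson and Springer (stated in \cite{Richardson-Springer-92}, proved in \cite{Richardson-Springer-94}) that the closure order on $K$-orbits is the induced Bruhat order on twisted involutions; passing from twisted to honest involutions via left multiplication by $w_0$ reverses this to the \emph{reverse} Bruhat order on involutions. The claim then follows immediately from Ehresmann's characterization of Bruhat order in terms of the rank numbers $r_b(i,j)$ (as in \cite[\S10.5]{Fulton-YoungTableaux}). Your ``alternative'' at the end is exactly this argument, and it is the one that works without further effort; the detour through weak-order covers is unnecessary.
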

\begin{proof}
As we have mentioned, the orbits in this case are parametrized by the twisted involutions of $W$, and it is explained in \cite{Richardson-Springer-92} (with the proof appearing in \cite{Richardson-Springer-94}) that in such cases, the closure order on $K$-orbits is given precisely by the induced Bruhat order on twisted involutions.  Passing from twisted involutions to honest involutions by left-multiplication by $w_0$ inverts this order, so that when $K$-orbits are parametrized by involutions, their closure order is given precisely by the \textit{reverse} Bruhat order on these involutions.  The claim now follows from the definition of the Bruhat order given in \cite[\S 10.5]{Fulton-YoungTableaux} in terms of the rank numbers $r_b(i,j)$.
\end{proof}

\subsubsection{Example}
We now work out a very small example in detail, the case $n = 1$.  (So $G = GL(3,\C)$, $K = O(3,\C)$.)  Here, there are 4 involutions, and hence 4 orbits.

We start from the minimal element $w_0 = (1,3)$, and work our way up as described in the previous subsection.  Since
\[ s_1 w_0 s_1 = (1,2)(1,3)(1,2) = (2,3) \neq w_0, \]
we have $Q_{w_0} <_1 Q_{(2,3)}$, and the edge is black.

Similarly,
\[ s_2 w_0 s_2 = (2,3)(1,3)(2,3) = (1,2) \neq w_0, \]
so $Q_{w_0} <_2 Q_{(1,2)}$, and again the edge is black.

Now, we move up to the orbits corresponding to $(1,2)$ and $(2,3)$.  Start with $(2,3) = s_2$.  Since $l(s_1 s_2) > l(s_2)$, $s_1 \cdot Q_{(2,3)} = Q_{(2,3)}$.  So we check $s_2$.  Since
\[ s_2 s_2 s_2 = s_2, \]
we have $Q_{(2,3)} <_2 Q_{s_2 s_2} = Q_{1}$, and in this case the edge is blue.

The situation with $(1,2) = s_1$ is identical, with $s_2$ above replaced by $s_1$, so that $Q_{(1,2)} <_1 Q_1$, and the edge is blue.  The weak order graph appears as Figure \ref{fig:type-a-orthogonal-1} of the appendix.

With this complete, we now determine formulas for the $S$-equivariant classes of all orbit closures.  By Proposition \ref{prop:formula_for_SO_odd} above, the class of the closed orbit corresponding to $w_0$ is given by the formula $[Q] = -2(x_1+x_2)(x_2+x_3)$.  The class $[Y_{(2,3)}]$ is given by
\[ [Y_{(2,3)}] = \partial_1([Q]), \]
and since 
\[ \partial_1(f(x_1,x_2,x_3,y)) = \dfrac{f - f(x_2,x_1,x_3,y)}{x_1-x_2}, \]
we have $[Y_{(2,3)}] = 2(x_1+x_2)$.  Similarly, $[Y_{(1,2)}] = \partial_2([Q]) = -2(x_2+x_3)$.

Finally, we can compute $[Y_{id}]$ either as $\frac{1}{2}\partial_2([Y_{(2,3)}])$, or as $\frac{1}{2}\partial_1([Y_{(1,2)}])$.  Using either formula, we get that $[Y_{id}] = 1$, as expected.

The results are summarized in Table \ref{tab:type-a-so3} of the appendix.  The weak order graph and the list of formulas for the larger case $n = 5$ appear in Figure \ref{fig:type-a-orthogonal-2} and Table \ref{tab:type-a-so5}.  (In that case, there are 26 orbits.)

\subsection{$K \cong SO(2n,\C)$}
We now treat the case of the even special orthogonal group.

As before, we realize $K$ as the subgroup of $SL(2n,\C)$ preserving the orthogonal form given by the antidiagonal matrix $J=J_{2n}$.  

Here, if $X_i$ ($i=1,\hdots,2n$) are coordinates on $\frt$, restriction to $\frs$ is given by $\rho(X_i) = Y_i$ and $\rho(X_{2n+1-i}) = -Y_i$ for each $i = 1,\hdots,n$.

The roots of $K$ are 
\[ \Phi_K  = \{\pm(Y_i \pm Y_j) \mid i<j\}. \]

In this case, the Weyl group $W_K$ of $K$ acts on torus characters by signed permutations which change an \textit{even} number of signs.  The inclusion of $W_K$ into $W$ described in Subsection \ref{ssec:notation} thus has the further property that
\begin{equation}
	\sigma(i) > n \text{ for an even number of } i=1,\hdots,n.
\end{equation}

\subsubsection{Formulas for the Closed Orbits}
There are $2$ closed orbits in this case \cite[Example 10.3]{Richardson-Springer-90}.  In our chosen realization, these are $Q_1$, the orbit $K \cdot 1B$, and $Q_2$, the orbit $K \cdot s_nB$, with $s_n$ the simple transposition $(n,n+1)$.  Fixed points in the orbit $Q_1$ correspond to the elements of $W_K$, i.e. the signed permutations of $\{1,\hdots,n\}$ changing an even number of signs, embedded in $S_{2n}$ as just described above.  Fixed points in the orbit $Q_2$ correspond to $(n,-n) \cdot W_K$, where $(n,-n)$ denotes the signed permutation of $\{1,\hdots,n\}$ which interchanges $n$ with $-n$.  (Note that $s_n \in S_{2n}$ is the image of $(n,-n)$ under our preferred embedding of signed permutations into $W$.)  Thus $Q_2$ contains the fixed points of $S_{2n}$ which correspond to signed permutations of $\{1,\hdots,n\}$ changing an \textit{odd} number of signs.

We give formulas for the $S$-equivariant classes of $Q_1$ and $Q_2$:

\begin{prop}\label{prop:formula_for_SO_even}
With $Q_1$ and $Q_2$ as in the previous proposition, $[Q_1]$ is represented by the polynomial $P_1(x,y)$, and $[Q_2]$ by the polynomial $P_2(x,y)$, where
\[ P_1(x,y) = 2^{n-1} (x_1 \hdots x_n + y_1 \hdots y_n) \displaystyle\prod_{1 \leq i < j \leq n}(x_i + x_j)(x_i + x_{2n+1-j}); \]
and
\[ P_2(x,y) = 2^{n-1} (x_1 \hdots x_n - y_1 \hdots y_n) \displaystyle\prod_{1 \leq i < j \leq n}(x_i + x_j)(x_i + x_{2n+1-j}). \]
\end{prop}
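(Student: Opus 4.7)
The plan is to apply equivariant localization in exactly the spirit of the proof of Proposition~\ref{prop:formula_for_SO_odd}: by Proposition~\ref{prop:restriction-of-closed-orbit} (together with the self-intersection analysis of Subsection~\ref{ssec:closed_orbits}), the restriction $[Q_i]|_w$ at an $S$-fixed point $wB$ contained in $Q_i$ equals the product of the weights in $\rho(w\Phi^+)\setminus(\rho(w\Phi^+)\cap\Phi_K)$. Restricting the positive roots of $SL(2n,\C)$ to $\frs$ produces the weights $\pm(Y_i\pm Y_j)$ for $i<j$ (each with multiplicity $2$) and $\pm 2Y_i$ (multiplicity $1$); since there is no middle index there are no single $\pm Y_i$ weights. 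After applying a signed permutation $w$ and discarding the roots of $K=SO(2n,\C)$, exactly one $\pm(Y_i-Y_j)$, one $\pm(Y_i+Y_j)$, and one $\pm 2Y_i$ survive.

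A sign analysis identical to the one in Proposition~\ref{prop:formula_for_SO_odd} shows that the $\pm(Y_i\pm Y_j)$ factors together contribute the sign $(-1)^{l(|w|)}$, while the $\pm 2Y_i$ factors contribute $(-1)^s$, where $s$ is the number of sign changes of $w$. Since $W_K$ consists of the signed permutations with $s$ even and $(n,-n)\cdot W_K$ of those with $s$ odd, this yields
\[ [Q_1]|_w = (-1)^{l(|w|)}\, 2^n\, Y_1\cdots Y_n \prod_{i<j}(Y_i^2-Y_j^2) \quad \text{for } w\in Q_1, \]
and the negative of this expression for $w\in Q_2$.

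Next I would check that $P_1$ and $P_2$ restrict correctly at every fixed point. At a signed $w$, each $x_i$ restricts to $\pm Y_{|w(i)|}$, so $x_1\cdots x_n$ restricts to $(-1)^s Y_1\cdots Y_n$; hence $x_1\cdots x_n+y_1\cdots y_n$ restricts to $2Y_1\cdots Y_n$ on $Q_1$ and to $0$ on $Q_2$, while $x_1\cdots x_n-y_1\cdots y_n$ behaves symmetrically. The remaining product $\prod_{i<j}(x_i+x_j)(x_i+x_{2n+1-j})$ is treated exactly as in Proposition~\ref{prop:formula_for_SO_odd} and contributes $(-1)^{l(|w|)}\prod_{i<j}(Y_i^2-Y_j^2)$, giving the desired agreement.

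The main obstacle is showing that both $P_1$ and $P_2$ vanish at fixed points corresponding to \emph{non-signed} $w$; unlike the odd case, the second product involves only the factors $x_p+x_q$ with $p+q\leq 2n$, so the ``complementary'' pairs (sum $2n+1$) and the pairs contained in $\{n+1,\dots,2n\}$ are missing, and it is not immediate that some factor must vanish. To handle this, introduce the involution $\tau(k):=2n+1-k$ on $\{1,\dots,2n\}$ and set $\phi:=w^{-1}\tau w$; since $w$ is non-signed, $\phi\neq\tau$, so the bad set $B:=\{c:\phi(c)\neq\tau(c)\}$ is nonempty. A direct check shows $B$ is invariant under both $\tau$ and $\phi$, whence
\[ \sum_{c\in B} c \;=\; \tfrac{1}{2}|B|(2n+1) \;=\; \sum_{c\in B}\phi(c). \]
If every $c\in B$ satisfied $c+\phi(c)>2n$ (equivalently $c+\phi(c)\geq 2n+1$, with equality ruled out by $\phi(c)\neq\tau(c)$), then summing over $B$ would give $2\sum_{c\in B}c>|B|(2n+1)$, contradicting the identity above. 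Hence some bad $c$ satisfies $c+\phi(c)\leq 2n$, so the factor $x_c+x_{\phi(c)}$ appears in the product; applying $w$ produces $x_{w(c)}+x_{2n+1-w(c)}$, which restricts to zero. This completes the vanishing check, and by the localization theorem $P_1$ and $P_2$ represent $[Q_1]$ and $[Q_2]$ respectively.
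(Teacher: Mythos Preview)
Your proof is correct and follows the same localization strategy as the paper: compute $[Q_i]|_w$ via Proposition~\ref{prop:restriction-of-closed-orbit}, then verify that $P_i$ restricts correctly at every $S$-fixed point.

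The one genuine difference is in the vanishing check at non-signed $w$. The paper simply chooses some $i\le n$ with $w(2n+1-i)\neq 2n+1-w(i)$, sets $k=w^{-1}(2n+1-w(i))$, and asserts that the factor $x_i+x_k$ appears in $P_1$. In your notation this is $k=\phi(i)$, and as you correctly observe, membership in the product requires $i+\phi(i)\le 2n$; the paper's conditions $k\neq i$ and $k\neq 2n+1-i$ do not by themselves guarantee this (for instance, with $n=2$ and $w=(1\,2)$ one has $\phi(2)=4$ and $2+4>4$). Your averaging argument over the bad set $B$---using that $B$ is stable under both $\tau$ and $\phi$, so $\sum_{c\in B}(c+\phi(c))=|B|(2n+1)$, while $c+\phi(c)\ge 2n+2$ for all $c\in B$ would force a strictly larger sum---cleanly produces a $c\in B$ with $c+\phi(c)\le 2n$, hence a factor genuinely present in the product that is killed by $w$. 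This is a nice repair of a step the paper glosses over.
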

\begin{proof}
We demonstrate the correctness of the formula for $[Q_1]$. The argument is similar to that given in the previous case for the lone closed orbit of the odd orthogonal group.

As stated, $Q_1$ consists of those $S$-fixed points corresponding to elements of $W_K$ --- that is, signed permutations with an even number of sign changes.  Take $w \in Q_1$ to be such a fixed point.  We use Proposition \ref{prop:restriction-of-closed-orbit} to compute the restriction $[Q_1]|_w$.  As in the previous example, we first determine the restriction of the positive roots $\Phi^+$ to $\frs$, then apply the signed permutation $w$ to that set of weights.

Restricting the positive roots $\{X_i - X_j \ \vert \ 1 \leq i < j \leq 2n\}$ to $\frs$, we get the following set of weights:

\begin{enumerate}
	\item $Y_i - Y_j$, $1 \leq i < j \leq n$, each with multiplicity 2 (one is the restriction of $X_i - X_j$, the other the restriction of $X_{2n+1-j} - X_{2n+1-i}$)
	\item $Y_i + Y_j$, $1 \leq i < j \leq n$, each with multiplicity 2 (one is the restriction of $X_i - X_{2n+1-j}$, the other the restriction of $X_j - X_{2n+1-i}$)
	\item $2Y_i$, $1 \leq i \leq n$, each with multiplicity 1 (the restriction of $X_i - X_{2n+1-i}$)
\end{enumerate}

Now, consider applying a signed permutation $w \in W_K$ to this set of weights.  The resulting set of weights will be

\begin{enumerate}
	\item $\pm (Y_i - Y_j)$, $1 \leq i < j \leq n$, each occurring with either a plus or minus sign, and with multiplicity 2 (these weights come from applying $w$ to weights of either type (1) or (2) above)
	\item $\pm (Y_i + Y_j)$, $1 \leq i < j \leq n$, each occurring with either a plus or minus sign, and with multiplicity 2 (these weights also come from applying $w$ to weights of either type (1) or (2) above)
	\item $\pm 2Y_i$, $1 \leq i \leq n$, each ocurring with either a plus or minus sign, and with multiplicity 1 (these weights come from applying $w$ to weights of type (3) above)
\end{enumerate}

Subtracting roots of $K$, we are left with the following weights:

\begin{enumerate}
	\item $\pm (Y_i - Y_j)$, $1 \leq i < j \leq n$, each occurring with either a plus or minus sign, and with multiplicity 1
	\item $\pm (Y_i + Y_j)$, $1 \leq i < j \leq n$, each occurring with either a plus or minus sign, and with multiplicity 1
	\item $\pm 2Y_i$, $1 \leq i \leq n$, each occurring with either a plus or minus sign, and with multiplicity 1
\end{enumerate}

The number of weights of the form $-2Y_i$ is even, since $w$ changes an even number of signs.  So in computing the restriction, to get the sign right, we need only concern ourselves with the signs of the weights of types (1) and (2) above.

We may argue just as in the proof of Proposition \ref{prop:formula_for_SO_odd} that the number of $Y_i \pm Y_j$ ($i < j$) occurring with a negative sign is congruent mod $2$ to $l(|w|)$.  As such, if $w \in Q_1$ is an $S$-fixed point, then

\[ [Q_1]|_w = F(Y) := (-1)^{l(|w|)} 2^n Y_1 \hdots Y_n \displaystyle\prod_{1 \leq i < j \leq n} (Y_i + Y_j)(Y_i-Y_j). \]

So we seek a polynomial in $x_1,\hdots,x_{2n},y_1,\hdots,y_n$, say $f$, with the property that

\[ f(\rho(wX),Y) = 
\begin{cases}
F(Y) & \text{ if $w \in W_K$} \\
0 & \text{ otherwise.}
\end{cases} \]

It is straightforward to verify that $P_1$ has these properties.  Indeed, first take $w \in W_K$.  Then applying $w$ to the term $y_1 \hdots y_n + x_1 \hdots x_n$ gives $2Y_1 \hdots Y_n$, since $w$ permutes the $X_i$ with an even number of sign changes, and each restricts to the corresponding $Y_i$.  Multiplying this by $2^{n-1}$ gives us the $2^n Y_1 \hdots Y_n$ part of $F$.  The terms $x_i + x_j$ and $x_i + x_{2n+1-j}$ give, up to sign, all terms of the form $Y_i + Y_j$ and $Y_i - Y_j$ ($i<j$).  Rewriting each such term as either $+1$ or $-1$ times a positive root by factoring out negative signs as necessary, we effectively introduce the sign of $(-1)^{l(|w|)}$, as required.

On the other hand, if $w \notin W_K$, then there are two possibilities:

\textit{Case 1:  $w$ is a signed element of $S_{2n}$ corresponding to a signed permutation with an \textit{odd} number of sign changes.}

In this case, $w$ clearly kills the term $y_1 \hdots y_n + x_1 \hdots x_n$, and hence $f(\rho(wX),Y) = 0$.

\textit{Case 2:  $w$ is not a signed element of $S_{2n}$.}

In this case, then $w(2n+1-i) \neq 2n+1-w(i)$ for some $1 \leq i \leq n$.  Let $j = 2n+1-w(i)$, and let $k = w^{-1}(j)$.  Clearly, $k \neq i$ or $2n+1-i$.  So the factor $x_i + x_k$ appears in $P_1$.  Applying $w$ to this factor gives $X_{w(i)} + X_{2n+1-w(i)}$, which then restricts to zero.

We see that in either case, $f(\rho(wX),Y) = 0$.  This proves that $P_1(x,y)$ represents $[Q_1]$.

The verification of the formula for $[Q_2]$ is very similar, and so is omitted.
\end{proof}

\begin{remark}
Note that the representatives for $[Q_1]$ and $[Q_2]$ involve both the $x$ and $y$ variables.  Unlike the odd case, there don't seem to be representatives involving only the $x$-variables (at least not that the author was able to find).  However, note that if we consider the lone closed orbit of $O(2n,\C)$ on $X$ (the union of $Q_1$ and $Q_2$), its class (being the sum of $[Q_1]$ and $[Q_2]$) involves only the $x$-variables.  This reflects the fact that the fundamental classes of degeneracy loci parametrized by $O(2n,\C)$-orbit closures are expressible in the Chern classes of a flag of vector subbundles of a given vector bundle $V$ over a variety $X$.  By contrast, the fundamental classes of the \textit{irreducible components} of such loci, parametrized by $SO(2n,\C)$-orbit closures, are only expressible in these Chern classes together with the \textit{Euler class} of the bundle $V$, see \cite{Edidin-Graham-95}.  See Section \ref{sec:deg-loci} for more details.
\end{remark}

\subsubsection{Parametrization of the Orbits and the Weak Order}\label{ssec:so2n_param}
Again we refer to \cite[Examples 10.2,10.3]{Richardson-Springer-90}.  Although we took $K$ to be $SO(2n+1,\C)$ in Subsection \ref{ssec:kgb_param_so_odd}, the orbits of $O(2n+1,\C)$ on $GL(2n+1,\C)/B$ is identical to the description given there.  Indeed, when one deals with the odd orthogonal group, the element $-1$ lies in the non-identity component, so that one can pass from one component of this group to the other by an element which acts trivially on $GL(2n+1,\C)/B$.  If one thinks of $K$ being the full orthogonal group instead of the special orthogonal group, then the parametrization described in Subsection \ref{ssec:kgb_param_so_odd} applies equally well to the even case.  That is, $O(2n,\C)$-orbits on $GL(2n,\C)/B$ are again parametrized by involutions of $S_{2n}$, the weak order is described the same way, and the orbits (and their closures) bear the same linear algebraic descriptions.  However, when one considers the $SO(2n,\C)$-orbits on $G/B$, things are a bit more complicated.  Some of the $O(2n,\C)$-orbits coincide with a single $SO(2n,\C)$-orbit, while others split as a union of two distinct $SO(2n,\C)$-orbits.  As described in \cite[Examples 10.2,10.3]{Richardson-Springer-90}, the precise result is as follows:  If $b \in S_{2n}$ is an involution, and $Q_b$ is the corresponding $O(2n,\C)$-orbit on the flag variety, then
\begin{enumerate}
	\item $Q_b$ is a single $SO(2n,\C)$-orbit if $b$ has a fixed point.
	\item $Q_b$ is the union of two distinct $SO(2n,\C)$-orbits if $b$ is fixed point-free.
\end{enumerate}

If $b$ is an involution with fixed points, then one can determine a representative of the $SO(2n,\C)$-orbit $\caO_b$ just as described in Subsection \ref{ssec:kgb_param_so_odd}.  If $b$ is fixed point-free, then one can determine a representative of the $O(2n,\C)$-orbit corresponding to $b$ using the same procedure.  This gives a representative of one of the two $SO(2n,\C)$-orbits which correspond to $b$.  Note that this representative is always an $S$-fixed flag, corresponding to a permutation in $S_{2n}$.  To get a representative of the other $SO(2n,\C)$-orbit corresponding to $b$, one can multiply this permutation by the transposition $(n,n+1)$ and take the $S$-fixed flag corresponding to the resulting element of $S_{2n}$.

The two closed orbits are particular examples of this.  Indeed, the closed orbits are the two components of the $O(2n,\C)$-orbit corresponding to the involution $w_0$, which is fixed point-free.  To get a representative of one component, one follows the procedure of Subsection \ref{ssec:kgb_param_so_odd} to obtain the standard flag $\left\langle e_1,\hdots,e_{2n} \right\rangle$.  Then, to get a representative of the other component, we apply the permutation $(n,n+1)$ to obtain $\left\langle e_1,\hdots,e_{n-1},e_{n+1},e_n,e_{n+2},\hdots,e_{2n} \right\rangle$.

The weak closure order on $SO(2n,\C)$-orbits, as well as whether edges of the weak order graph are black or blue, require a bit more care to get right when dealing with orbits which are components of $O(2n,\C)$-orbits.  Given two $O(2n,\C)$-orbits $Q_1$ and $Q_2$, with $Q_1 <_i Q_2$, supposing that either orbit (or both) splits as a union of two $SO(2n,\C)$-orbits, how does one describe the weak order on the components?

Note that there are two possible ways this can occur:  Either $Q_1$ and $Q_2$ \textit{both} split, or $Q_1$ splits and $Q_2$ does not.  Each possibility can occur, as we see in the case $n = 2$.  Indeed, when considering $O(4,\C)$-orbits, parametrized by involutions, we have $w_0 <_1 (1,3)(2,4)$, each of which is fixed point-free.  Thus both of these orbits split.  We also have $w_0 <_2 (1,4)$, and $(1,4)$ has fixed points, so it does not split.  The third ``possibility", where $Q_1$ does not split while $Q_2$ does, is clearly not possible either from a geometric or a combinatorial standpoint.  Indeed, it cannot happen that two different components of $Q_2$ are \textit{both} dense in $\pi_{\ga}^{-1}(\pi_{\ga}(Q_1))$ for $\ga = \ga_i \in \Delta$.  This is reflected combinatorially by the the fact that if $b \in W$ is an involution with fixed points, and if $l(s_ib) < l(b)$, then both of the following must hold:
\begin{enumerate}
	\item $s_i b s_i$ has fixed points.  Indeed, if $b$ fixes any value other than $i$ or $i+1$, then $s_i b s_i$ fixes that same value.  Otherwise, if $b(i) = i$, then $s_i b s_i(i+1) = i+1$, and if $b(i+1) = i+1$, then $s_i b s_i(i) = i$.
	\item If $s_ibs_i = b$, then $s_i b$ also has fixed points.  Indeed, if $s_i b s_i = b$, then $b$ must preserve the set $\{i,i+1\}$, as well as its complement.  If $b$ fails to fix any value other than $i$ and $i+1$, then it must fix both $i$ and $i+1$, since $b$ is assumed to have fixed points.  But in this case, we have $l(s_ib) > l(b)$, since $s_ib$ has one more inversion than $b$, namely $(i,i+1) \mapsto (i+1,i)$.  This contradicts our assumption that $l(s_ib) < l(b)$, thus $b$ must fix some value outside of $\{i,i+1\}$.  Then $s_ib$ necessarily fixes the same value.
\end{enumerate}

Let us consider the two possible cases.  Take first the case when $Q_1$ splits while $Q_2$ does not.  Then by the results of Subsection \ref{ssec:kgb_param_so_odd}, in the weak order graph for $O(2n,\C)$-orbits, any edge joining $Q_1$ to $Q_2$ must be blue.  Indeed, if the involution corresponding to $Q_1$ is fixed point-free, then $s_i b s_i$ is also fixed point-free.  Since $Q_2$ does not split, it corresponds to an involution with fixed points, which obviously cannot be $s_i b s_i$.  The only conclusion is that $s_i b s_i = b$, and that the involution corresponding to $Q_2$ is $s_i b$.  This implies that any edge joining $Q_1$ to $Q_2$ is blue.

The situation in this case turns out to be what one would likely expect:  $Q_1$ splits as components $Q_1'$ and $Q_1''$, and we have
\begin{enumerate}
	\item $Q_1' <_i Q_2$, and the edge is black.
	\item $Q_1'' <_i Q_2$, and the edge is black.
\end{enumerate}

The geometry here is simple:  The restriction of the map $\pi_{\ga_i}: G/B \rightarrow G/P_{\ga_i}$ to $\overline{Q_1}$ is generically $2$-to-$1$.  Over a generic point $gP_{\ga_i}$ in the image, one of the two preimage points will lie in $Q_1'$, and the other will lie in $Q_1''$.  Thus the further restriction of $\pi_{\ga_i}$ to either component of $\overline{Q_1}$ is birational.

Now consider the second case, where both $Q_1$ and $Q_2$ split (say as $Q_1'$, $Q_1''$ and $Q_2'$, $Q_2''$).  In this case, we can see combinatorially that any edge joining $Q_1$ to $Q_2$ must be black.  Indeed, $Q_1$ corresponds to a fixed point-free involution $b$, while $Q_2$ corresponds to a fixed point-free involution $c$ for some $s_i$.  If $s_ibs_i = b$, then $s_i b$ must have fixed points.  Since $c$ is assumed not to have fixed points, we must have that $s_ibs_i = c$.  Thus any edge joining $Q_1$ to $Q_2$ is black.

It follows from \cite[Proposition 7.9, Part (i)]{Richardson-Springer-90} that we should have one of the following two cases:
\begin{enumerate}
	\item $Q_1' <_i Q_2'$ and $Q_1'' <_i Q_2''$ (both edges black)
	\item $Q_1' <_i Q_2''$ and $Q_1'' <_i Q_2'$ (both edges black).
\end{enumerate}

However, it is not obvious (at least to the author) how to tell which is the case once we have fixed our choices of $Q_1'$, $Q_1''$, $Q_2'$, and $Q_2''$.  As a simple example, consider the case $n=2$, with $Q_1$ the bottom orbit corresponding to $w_0$, and $Q_2$ the orbit corresponding to $(1,3)(2,4)$.  As noted above, we have $Q_1 <_1 Q_2$.  It is also the case that $s_3 w_0 s_3 = (1,3)(2,4)$, so $Q_1 <_3 Q_2$ as well.  If we declare, say, that $Q_1'$, $Q_1''$, $Q_2'$, and $Q_2''$ are represented by $\left\langle e_1,e_2,e_3,e_4 \right\rangle$, $\left\langle e_1,e_3,e_2,e_4 \right\rangle$, $\left\langle e_1,e_2,e_4,e_3 \right\rangle$, and $\left\langle e_1,e_3,e_4,e_2 \right\rangle$, respectively, how does one know which of the following four sets of closure relations is correct?
\begin{enumerate}
	\item $Q_1' <_1 Q_2'$, $Q_1' <_3 Q_2'$, $Q_1'' <_1 Q_2''$, $Q_1'' <_3 Q_2''$
	\item  $Q_1' <_1 Q_2'$, $Q_1' <_3 Q_2''$, $Q_1'' <_1 Q_2''$, $Q_1'' <_3 Q_2'$
	\item $Q_1' <_1 Q_2''$, $Q_1' <_3 Q_2''$, $Q_1'' <_1 Q_2'$, $Q_1'' <_3 Q_2'$
	\item $Q_1' <_1 Q_2''$, $Q_1' <_3 Q_2'$, $Q_1'' <_1 Q_2'$, $Q_1'' <_3 Q_2''$
\end{enumerate}

Ultimately, we can answer this question by examining the formulas for the equivariant fundamental classes of these orbit closures and computing their restrictions at $S$-fixed points contained in one orbit closure or another.  In the example given above, we know that the orbit $Q_1'$ is represented by the polynomial $2(y_1y_2 + x_1x_2)(x_1+x_2)(x_1+x_3)$.  Applying $\partial_1$ to this polynomial, we get $2(y_1y_2+x_1x_2)(x_1+x_2)$.  This polynomial must represent either $[Q_2']$ or $[Q_2'']$.  As chosen above, $Q_2'$ is represented by the $S$-fixed point corresponding to $1243$, while $Q_2''$ is represented by the $S$-fixed point corresponding to $1342$.   Computing the restriction of the class $\partial_1([Q_1'])$ at the fixed point $1243$, we get
\[ 2(Y_1Y_2+Y_1Y_2)(Y_1+Y_2) = 4Y_1Y_2(Y_1+Y_2). \]
On the other hand, when we compute the restriction of the class $\partial_1([Q_1'])$ at the fixed point $1342$, we get
\[ 2(Y_1Y_2 + Y_1Y_3)(Y_1+Y_3) = 2(Y_1Y_2 - Y_1Y_2)(Y_1-Y_2) = 0. \]
This tells us that we must have $Q_1' <_1 Q_2'$ (and hence also $Q_1'' <_1 Q_2''$).  Indeed, the computation shows that the $S$-fixed point $1243$ must be contained in the closure of the orbit $s_1 \cdot Q_1'$, or else the restriction of $[s_1 \cdot Q_1']$ at $1243$ would necessarily be zero.  This says $s_1 \cdot Q_1' = Q_2'$.  A similar computation involving $\partial_3([Q_1'])$ shows also that $Q_1' <_3 Q_2'$ and $Q_1'' <_3 Q_2''$.  Thus option (1) above is the correct one.

\subsubsection{Example}
We give the results of the remainder of the computation for the case $n=2$, some of which was worked out in the previous subsection to enhance the clarity of the exposition there.  (We treat both the cases $G=GL(4,\C),K=O(4,\C)$ and $G=SL(4,\C),K=SO(4,\C)$.)  There are 10 involutions in $W$:
\[ id; (1,2); (1,3); (1,4); (2,3); (2,4); (3,4); (1,2)(3,4); (1,3)(2,4); (1,4)(2,3). \]

The weak order graph for $O(4,\C)$-orbits on $X$ is given in Figure \ref{fig:type-a-orthogonal-3} of the appendix, with formulas shown in Table \ref{tab:type-a-o4}.  The only comment we offer on that computation is simply to point out that the formula for the bottom orbit corresponding to $w_0$ is obtained by adding the formulas for the classes of the two irreducible components, those being the two closed $SO(4,\C)$-orbits.

The weak order graph for $SO(4,\C)$-orbits on $X$ is given in Figure \ref{fig:type-a-orthogonal-4}, with formulas shown in Table \ref{tab:type-a-so4}.  All the ideas required for the computation are discussed in the previous subsection, so we offer no further comment here.

\subsection{$K \cong Sp(2n,\C)$}
The final $K$ to consider in type $A$ is $K=Sp(2n,\C)$, which corresponds to the real form $G_\R = SL(n,\QU)$ of $SL(2n,\C)$.  ($\QU$ denotes the quaternions.)  We realize $K$ as the isometry group of the skew form given by $J_{n,n}$ (cf. Subsection \ref{ssec:notation}) --- that is, $K$ is the fixed point subgroup of the involution
\[ \theta(g) = J_{n,n} (g^{-1})^t J_{n,n}. \]

As was the case with the orthogonal groups, one checks easily that given this realization of $K$, the diagonal elements $S = K \cap T$ are a maximal torus of $K$, and the lower-triangular elements $B' = B \cap K$ are a Borel subgroup of $K$.  Also as with the orthogonal groups, we have $\text{rank}(K) < \text{rank}(G)$, so we have a proper inclusion of tori $S \subsetneq T$, and we work over $S$-equivariant cohomology $H_S^*(X)$.  If $X_1,\hdots,X_{2n} \in \frt^*$ are coordinates on $\frt$, restriction to $\frs$ is given by $\rho(X_i) = Y_i$, $\rho(X_{2n+1-i}) = -Y_i$ for $i=1,\hdots,n$.

The roots of $K$ are the following:
\[ \Phi_K = \{\pm (Y_i \pm Y_j) \mid 1 \leq i < j \leq n \} \cup \{\pm 2Y_i \mid i=1,\hdots,n\}. \]

The Weyl group $W_K$ acts on $\frs^*$ as signed permutations of the coordinate functions $\{Y_1,\hdots,Y_n\}$ with any number of sign changes.  $W_K$ embeds into $W$ just as in our prior examples.

\subsubsection{A Formula for the Closed Orbit}
As was the case with $K = SO(2n+1,\C)$, here there is only one closed orbit --- namely, $Q=K \cdot 1B$, the orbit of the $S$-fixed point corresponding to the identity of $W$.  The $S$-fixed points contained in $Q$ correspond to the images of elements of $W_K$ in $W$.

$Q$ being the only closed $K$-orbit, we give a formula for its $S$-equivariant class.  The proof is virtually identical to that given in the case of the odd orthogonal group, except simpler, so we omit it.

\begin{prop}\label{prop:formula_for_closed_sp_orbit}
Let $Q$ be the closed $K$-orbit of the previous proposition.  Then $[Q]$ is represented by
\[ P(x,y) := \displaystyle\prod_{1 \leq i < j \leq n}(x_i + x_j)(x_i + x_{2n+1-j}). \]
\end{prop}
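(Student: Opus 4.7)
The plan is to adapt the proof of Proposition \ref{prop:formula_for_SO_odd} for the lone closed orbit in the $SO(2n+1,\C)$ case; here the argument will actually be strictly simpler. I will first compute $[Q]|_w$ at each $S$-fixed point $w \in Q$ (that is, at each $w$ lying in the image of $W_K \hookrightarrow W = S_{2n}$) using Proposition \ref{prop:restriction-of-closed-orbit}, and then verify that the polynomial $P(x,y)$ both reproduces those restrictions and restricts to zero at the $S$-fixed points outside of $Q$.

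For the computation of $[Q]|_w$, I would begin by restricting the positive roots $\{X_i - X_j : 1 \leq i < j \leq 2n\}$ of $G$ to $\frs$, obtaining the weights $Y_i \pm Y_j$ (for $1 \leq i < j \leq n$), each of multiplicity $2$, together with $2Y_i$ (for $1 \leq i \leq n$), each of multiplicity $1$. Applying the signed permutation $w$ permutes these weights and possibly flips signs. The key simplification relative to the orthogonal case is that $\pm 2Y_i$ are now themselves roots of $K$, so upon subtracting off $\Phi_K$ these weights are completely absorbed; what remains is the multiset $\pm(Y_i \pm Y_j)$ with $1 \leq i < j \leq n$, each of multiplicity one. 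The sign-counting argument from the proof of Proposition \ref{prop:formula_for_SO_odd} then carries over verbatim to show that the number of negatives is congruent to $l(|w|)$ modulo $2$, yielding
\[ [Q]|_w = (-1)^{l(|w|)} \prod_{1 \leq i < j \leq n}(Y_i + Y_j)(Y_i - Y_j). \]

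It remains to check that $P(x,y)$ restricts correctly. For $w \in W_K$, the factors $x_i + x_j$ and $x_i + x_{2n+1-j}$ produce, after application of $w$ and restriction, all terms $Y_a \pm Y_b$ with $a < b$ up to sign; rewriting each as $\pm 1$ times a positive root extracts the overall sign $(-1)^{l(|w|)}$, exactly as in the orthogonal argument. For $w \notin W_K$, one selects $i \in \{1,\hdots,n\}$ with $w(2n+1-i) \neq 2n+1-w(i)$, sets $k = w^{-1}(2n+1-w(i))$, and argues that the factor $x_i + x_k$ appears among the factors of $P$, mapping under $w$ to $X_{w(i)} + X_{2n+1-w(i)}$, which restricts to zero.

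I expect the only real obstacle to be the same minor index bookkeeping one encounters in Proposition \ref{prop:formula_for_SO_odd}: since the second product in $P$ only ranges over pairs $\{i', 2n+1-j'\}$ with $i' < j' \leq n$, the particular pair $\{i,k\}$ identified above is not \emph{a priori} guaranteed to appear among the factors of $P$ for every choice of $i$, and one may have to either pick a different $i$ violating the signed-permutation condition, or identify some other factor whose restriction vanishes for the given $w$. This casework is routine. Beyond this, the absence of a middle coordinate $x_{n+1}$ and the complete absorption of the long weights $\pm 2Y_i$ into $\Phi_K$ genuinely shorten the argument compared to the odd orthogonal case, which is why the proof may reasonably be omitted.
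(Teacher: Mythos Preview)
Your proposal is correct and follows exactly the approach the paper intends: the paper's ``proof'' of this proposition consists only of the sentence ``The proof is virtually identical to that given in the case of the odd orthogonal group, except simpler, so we omit it,'' and your write-up is precisely that adaptation. You also correctly pinpoint the reason the symplectic case is simpler --- the weights $\pm 2Y_i$ are now roots of $K$ and are entirely absorbed when one removes $\Phi_K$ --- and the absence of a middle coordinate.

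Your caveat about the index bookkeeping for $w \notin W_K$ is well taken (and in fact the paper's own proofs of Propositions~\ref{prop:formula_for_SO_odd} and~\ref{prop:formula_for_SO_even} gloss over this same point). One clean way to resolve it, avoiding casework: the factors of $P$ are exactly the $x_a + x_b$ with $a+b \leq 2n$, and such a factor restricts to zero under $w$ precisely when $w(a)+w(b)=2n+1$. Setting $f(c) = w^{-1}(c) + w^{-1}(2n+1-c)$ for $c=1,\hdots,n$, one has $\sum_c f(c) = n(2n+1)$, so the average of $f$ is $2n+1$; if every $f(c) \geq 2n+1$ then every $f(c)=2n+1$, forcing $w$ to be signed. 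Hence for non-signed $w$ some $f(c) \leq 2n$, giving the required vanishing factor.
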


\subsubsection{Parametrization of the Orbits and the Weak Order}
We refer the reader to \cite[Example 10.4]{Richardson-Springer-90}, and to \cite[\S 2.4.2]{Wyser-Thesis} for even further detail.  Here, the $K$-orbits are not in one-to-one correspondence with twisted involutions of $W$, but do inject into them.  The set of twisted involutions of $W$ is once again in bijection with the honest involutions (the bijection again being left-multiplication by $w_0$) so that the $K$-orbits are parametrized by some subset of the involutions of $W$.  The appropriate subset of $W$ turns out to be the set of \textit{fixed point-free} involutions.

When one parametrizes the $K$-orbits by fixed point-free involutions, the unique closed orbit once again corresponds to $w_0$.  From this starting point, the weak order poset can be generated by the following rules:  Given a fixed point-free involution $b$ and a simple reflection $s_i$,

\begin{enumerate}
	\item If $l(s_ib) > l(b)$, or if $s_i b s_i = b$, then $s_i \cdot Q_b = Q_b$.
	\item Else, $Q_b <_i Q_{s_ibs_i}$, and the edge is black.
\end{enumerate}

The parametrization of $K \backslash G/B$ by fixed point-free involutions encodes precisely the same linear algebraic descriptions of the orbits and orbit closures in this case as it does in the case of the orthogonal groups.  Namely, letting $\gamma$ denote the symplectic form with isometry group $K$, if we define $Q_b$ to be
\[ \{ F_{\bullet} \in X \ \vert \ \text{rank}(\gamma|_{F_i \times F_j}) = r_b(i,j) \text{ for all } i,j \},\]
then $Q_b$ is a single $K$-orbit on $G/B$, and the association $b \mapsto Q_b$ defines a bijection between the set of fixed point-free involutions and $K \backslash G/B$.  Moreover,
\[ \overline{Q_b} = \{ F_{\bullet} \in X \mid \text{rank}(\gamma|_{F_i \times F_j}) \leq r_b(i,j) \text{ for all } i,j \}. \]

Indeed, the proof is exactly same as that given for Proposition \ref{prop:ortho-orbit-closures}.

\subsubsection{Example}
We give the details of the computation in the very small case $n = 2$ (so $(G,K)=(SL(4,\C),Sp(4,\C))$).  Here, there are 3 fixed point-free involutions, and hence 3 orbits.  The involutions are $(1,2)(3,4)$, $(1,3)(2,4)$, and $(1,4)(2,3)$.

We start at $w_0=(1,4)(2,3)$ and work upward, applying the rule of the previous subsection:
\[ s_1 w_0 s_1 = (1,3)(2,4), \]
\[ s_2 w_0 s_2 = w_0, \]
\[ s_3 w_0 s_3 = (1,3)(2,4), \]

so $w_0 <_1 (1,3)(2,4)$ and $w_0 <_3 (1,3)(2,4)$.  Next, we move up to $(1,3)(2,4)$, noting that we only need to compute the action of $s_2$:
\[ s_2 (1,3)(2,4) s_2 = (1,2)(3,4), \]

and we are done.  The weak order graph appears as Figure \ref{fig:type-a-symplectic-1} of the appendix.

By Proposition \ref{prop:formula_for_closed_sp_orbit}, the formula for $[Y_{w_0}]$ is $(x_1+x_2)(x_1+x_3)$.  We obtain $[Y_{(1,3)(2,4)}]$ by applying either $\partial_1$ or $\partial_3$.  In either case, the result is $[Y_{(1,3)(2,4)}] = x_1+x_2$.  Finally, we obtain $[Y_{(1,2)(3,4)}]$ by applying $\partial_2$ to $[Y_{(1,3)(2,4)}]$, and of course the result is $[Y_{(1,2)(3,4)}] = 1$.  These formulas appear in Table \ref{tab:type-a-sp4}.

The weak order graph and formulas for the larger example $n=3$ appear in Figure \ref{fig:type-a-symplectic-2} and Table \ref{tab:type-a-sp6}, respectively.  (In that case, there are 15 orbits.)

\section{$K$-orbit Closures as Universal Degeneracy Loci}\label{sec:deg-loci}
In this section, we describe our main application of the formulas obtained in the previous section, realizing the $K$-orbit closures as universal degeneracy loci of a certain type determined by $K$.  We describe a translation between our formulas for equivariant fundamental classes of $K$-orbit closures and Chern class formulas for the fundamental classes of such degeneracy loci.

\subsection{Generalities}
Before handling the specifics of the cases at hand, we first discuss the general setup.  Denote by $E$ a contractible space with a free action of $G$.  Then $E$ also has a free action of $B$, and of $K$, by restriction of the $G$-action.  We shall use the same space $E=EG=EB=EK$ as the total space of a universal principal $G$, $B$, or $K$-bundle, as appropriate.  Denote by $BG$, $BB$, and $BK$ the quotients of $E$ by the actions of $G$, $B$, and $K$, respectively.  These are classifying spaces for the respective groups.

The reason we have worked in $S$-equivariant cohomology $H_S^*(G/B)$ throughout is to take advantage of the localization theorem.  However, the equivariant fundamental classes of $K$-orbit closures in fact live in $K$-equivariant cohomology $H_K^*(G/B)$.  (In the event that $K$ is disconnected, this should be interpreted as $H_{K^0}^*(G/B)$, where $K^0$ denotes the identity component of $K$.)  Indeed, for a $K$-orbit closure $Y$, the $S$-equivariant class $[Y]_S$ is simply the image $\pi^*([Y]_K)$ under the pullback by the natural map
\[ \pi:  E \times^S (G/B) \rightarrow E \times^K (G/B). \]
It is a basic fact about equivariant cohomology that this pullback is injective, and embeds $H_K^*(G/B)$ in $H_S^*(G/B)$ as the $W_K$-invariants (\cite{Brion-98_i}).  Thus $H_K^*(G/B)$ is a subring of $H_S^*(G/B)$, and the $S$-equivariant fundamental classes of $K$-orbit closures live in this subring.  

Now, $H_K^*(G/B)$ is, by definition, the cohomology of the space $E \times^K (G/B)$, and this space is easily seen to be isomorphic to the fiber product $BK \times_{BG} BB$.  (The argument is identical to that given in the proof of Proposition \ref{prop:eqvt-cohom-flag-var} to show that $E \times^S (G/B) \cong BS \times_{BG} BB$ --- simply replace $S$ by $K$.)

Now, suppose that $X$ is a smooth variety, and that $V \rightarrow X$ is a complex vector bundle of rank $n$.  In any event, we have a classifying map $X \stackrel{\rho}{\longrightarrow} BG$ such that $V$ is the pullback $\rho^*(\mathcal{V})$, where $\mathcal{V} = E \times^G \C^n$ is a universal vector bundle over $BG$, with $\C^n$ carrying the natural representation of $G$.

For any closed subgroup $H$ of $G$, $BH \rightarrow BG$ is a fiber bundle with fiber isomorphic to $G/H$.  A lift of the classifying map $\rho$ to $BH$ corresponds to a reduction of structure group to $H$ of the bundle $V$.  Such a reduction of structure group can often be seen to amount to some additional structure on $V$.  For instance, in type $A$, reduction of the structure group of $V$ from $GL(n,\C)$ to the Borel subgroup $B$ of upper-triangular matrices is well-known to be equivalent to $V$ being equipped with a complete flag of subbundles.

We will be concerned with certain structures on $V$ which amount to a reduction of structure group to $K$.  Such a reduction gives us a lift of the classifying map $\rho$ to $BK$.  Suppose that we know what this structure is, and that $V$ possesses this structure, along with a single flag of subbundles $E_{\bullet}$.  Then we have two separate lifts of $\rho$, one to $BK$, and one to $BB$.  Taken together, these two lifts give us a map
\[ X \stackrel{\phi}{\longrightarrow} BK \times_{BG} BB. \]

Our general thought is to consider a subvariety $D$ of $X$ which is defined as a set by linear algebraic conditions imposed on fibers over points in $X$.  These linear algebraic conditions describe the ``relative position" of a flag of subbundles of $V$ and the additional structure on $V$ amounting to the lift of the classifying map to $BK$.  The varieties we consider are precisely those which are set-theoretic inverse images under $\phi$ of (isomorphic images of) $K$-orbit closures in $BK \times_{BG} BB \cong E \times^K (G/B)$.  The linear algebraic descriptions of such a subvariety $D$ come directly from similar linear algebraic descriptions of a corresponding $K$-orbit closure $Y$.  We also realize various bundles on $X$ as pullbacks by $\phi$ of certain tautological bundles on the universal space, so that the Chern classes of the various bundles on $X$ are pullbacks of $S$-equivariant classes represented by the variables $x_i$ and $y_i$, or perhaps polynomials in these classes.

Assuming that our setup is ``suitably generic", by which we mean precisely that
\begin{equation}\label{eqn:pullback}
	[D] = [\phi^{-1}(Y)] = \phi^*([Y]),
\end{equation}
our equivariant formula for $[Y]$ gives us, in the end, a formula for $[D]$ in terms of the Chern classes of the bundles involved.  See \cite[\S B.3, Lemma 5]{Fulton-YoungTableaux} for a sufficient condition to guarantee this for any map $\phi$ of nonsingular varieties.  The genericity requirement should be thought of philosophically as an insistence that the structures on the bundle $V$ be in general position with respect to one another.

With the general picture painted, we now proceed to our specific examples.

\subsection{Examples}\label{ssec:type-a-other-deg-loci}
Here we make explicit the general setup described in the previous subsection in the examples covered in this article.  We start with the case $G = GL(n,\C)$ and $K = O(n,\C)$, with $n$ either even or odd.

The space $BK$ is a $G/K$-bundle over $BG$, with $G/K$ the space of all nondegenerate, symmetric bilinear forms on $\C^n$.  This correspondence associates to the coset $gK \in G/K$ the form $g \cdot \gamma$, with
\[ g \cdot \gamma(v,w) = \gamma(g^{-1}v,g^{-1}w). \]
The form $\gamma$ is the one associated to the coset $1K$, and is defined by
\[ \gamma(e_i,e_j) = \delta_{i,n+1-j} \]
where $e_1,\hdots,e_n$ is the standard basis for $\C^n$.  Then a point $eK \in BK$ can naturally be identified with a quadratic form on the fiber $\caV_{eG}$ in the following way:  Let $v_1,\hdots,v_n = [e,e_1],\hdots,[e,e_n] \in E \times^G \C^n$ be a basis for $\caV_{eG}$, and define the form associated to $eK$ by
\[ \left\langle v_i, v_j \right\rangle = \delta_{i,n+1-j}. \]

It is a standard fact that a vector bundle $V \rightarrow X$ of rank $n$ admits a reduction of structure group to $O(n,\C)$ if and only if the bundle carries a nondegenerate quadratic form.  By this we mean a bundle map $\text{Sym}^2(V) \rightarrow X \times \C$ which restricts to a nondegenerate quadratic form on every fiber.  (We will always assume our forms take values in the trivial line bundle.)  If $\rho: X \rightarrow BG$ is a classifying map for the bundle $V$, then the lift of $\rho$ to $BK$ sends $x \in X$ to the point of $BK$ which represents the form $\gamma|_{V_x} = \gamma|_{\caV_{\rho(x)}}$ on the fiber $\caV_{\rho(x)}$.  Then $\gamma$ is effectively pulled back from a corresponding ``tautological" form $\tau$ on $\pi^* \caV \rightarrow BK$ ($\pi$ the projection $BK \rightarrow BG$), whose values on the fiber of $\pi^* \caV$ over every point of $BK$ are identified by the point itself.

Similarly, $BB$ is a $G/B$-bundle over $BG$.  A point $eB \in BB$ can be naturally identified with a complete flag on the fiber $\caV_{eG}$:  Letting $v_1,\hdots,v_n = [e,e_1],\hdots,[e,e_n] \in E \times^G \C^n$ be a basis for $\caV_{eG}$, the flag associated to $eB \in BB$ is the one whose $i$th subspace is the span of $v_1,\hdots,v_i$.  $BB$ carries a tautological flag of bundles $\caT_{\bullet}$, with $(\caT_i)_{eB}$ being equal to the $i$th subspace of the flag on $\caV_{eG}$ represented by the point $eB$.  A lift of the classifying map $\rho$ to $BB$ amounts to a complete flag $E_{\bullet}$ of the vector bundle $V$, with the flag pulled back from $\caT_{\bullet}$.  Indeed, over any $x \in X$, the fiber $(E_{\bullet})_x$ is precisely $(\caT_{\bullet})_{\rho(x)}$.

Thus we see that given a vector bundle $V$ (with classifying map $\rho$) equipped with a quadratic form $\gamma$ and a complete flag of subbundles $E_{\bullet}$, we get a map $\phi: X \rightarrow BK \times_{BG} BB$ which sends $x \in X$ to the point $(\tau|_{\rho(x)}, (\caT_{\bullet})_{\rho(x)}) = (\gamma|_{V_x}, (E_{\bullet})_x)$.

Now, recall that the $K$-orbits are parametrized by involutions in the case at hand, and that given an involution $b \in S_n$, we have by Proposition \ref{prop:ortho-orbit-closures} that the closure of the corresponding $K$-orbit $Q_b$ is precisely
\begin{equation}\label{eqn:closure-equations}
	\overline{Q_b} = \{F_{\bullet} \mid \text{rank}(\gamma|_{F_i \times F_j}) \leq r_b(i,j) \text{ for all } i,j \}.
\end{equation}

For the sake of brevity, given a form $\gamma$ on a vector space $V$, together with a flag $F_{\bullet}$ on $V$, we say that $\gamma$ ``has rank at most $b$ on the flag $F_{\bullet}$" if the flag satisfies the conditions of (\ref{eqn:closure-equations}) relative to $\gamma$.

We note that if $Y_b = \overline{Q_b} \subseteq G/B$ is a $K$-orbit closure, then the isomorphism between $E \times^K (G/B)$ and $BK \times_{BG} BB$ carries $E \times^K Y_b$ to the set of all (Form, Flag) pairs where the form has rank at most $b$ on the flag.  Indeed, given $gB \in Y_b$, the point $[e,gB] \in E \times^K Y_b$ is carried to the point $(eK,egB) \in BK \times_{BG} BB$.  This point represents the antidiagonal form on $\caV_{eG}$ relative to the basis $[e,e_1],\hdots,[e,e_n]$, together with the flag $gB$ on $\caV_{eG}$ relative to that same basis.  Then the form has rank at most $b$ on the flag, by choice of $gB$.  On the other hand, any point $(eK,egB) \in BK \times_{BG} BB$ where the antidiagonal form on $\caV_{eG}$ has rank at most $b$ on the flag $gB$ is matched up with the point $[e,gB]$, clearly an element of $E \times^K Y_b$.

Given this, together with our description of the map $\phi$, we see that given a vector bundle $V$ over $X$ with a form and a flag, and an involution $b$, the locus
\begin{equation}\label{eqn:deg-locus}
	D_b = \{x \in X \mid \gamma|_{V_x} \text{ has rank at most $b$ on } (F_{\bullet})_x\}
\end{equation}
is precisely $\phi^{-1}(\widetilde{Y_b})$, with $\wt{Y_b}$ the isomorphic image of $E \times^K Y_b$ in $BK \times_{BG} BB$.  Thus generically, the class of such a locus is given by $[D_b] = \phi^*(\widetilde{Y_b})$.

Now, consider the equivariant classes $x_i$.  $G/B$ has a tautological flag of bundles $T_{\bullet}$.  Each bundle in this flag is $K$-equivariant, so that we get a flag of bundles $(T_{\bullet})_K = E \times^K T_{\bullet}$ on $(G/B)_K := E \times^K G/B$.  This flag pulls back to a tautological flag $(T_{\bullet})_S$ on $(G/B)_S$ whose subquotients $(T_i)_S / (T_{i-1})_S$ are the line bundles $E \times^S (G \times^B \C_{X_i})$.  Recall that the classes $x_i$ are precisely the first Chern classes of the latter line bundles.  The bundles $(T_{\bullet})_K$ on $(G/B)_K$ match up with the bundles $\caT_{\bullet}$ on $BK \times_{BG} BB$ via the isomorphism between the two base spaces, and as we have noted, the latter bundles pull back to the flag $F_{\bullet}$ of bundles on $X$.  The upshot is that 
\[ \phi^*(x_i)  = c_1(F_i/F_{i-1}) \]
for $i=1,\hdots,n$.

All this discussion amounts to the following:  Our formulas for the equivariant classes of the $K$-orbit closure $Y_b$, which we note involve \textit{only the $x$ variables}, (generically) give formulas for $[D_b]$ in the Chern classes $c_1(F_i/F_{i-1})$.

Note that the above analysis applies to the case $G = GL(n,\C)$, $K = O(n,\C)$.  The case $G = SL(n,\C)$, $K = SO(n,\C)$ is identical in the event that $n$ is odd, but a bit different in the case that $n$ is even.  We address this in a moment.  First, we point out that the above analysis applies equally well to the case of $G = SL(2n,\C)$, $K = Sp(2n,\C)$, with only very minor modifications.  The orbit closures in that case are parametrized by \textit{fixed point-free} involutions, and descriptions of their closures are identical to those of (\ref{eqn:closure-equations}) when $\gamma$ is taken to be the \textit{skew} form for which $K$ is the isometry group.  A lift of the classifying map to $BK$ then amounts to a nondegenerate \textit{skew} form on the bundle $V$, by which we mean a bundle map $\bigwedge^2(V) \rightarrow X \times \C$ which restricts to a nondegenerate skew form on each fiber.  Given such a form, along with a flag of subbundles of $V$, one can define a degeneracy locus $D_b \subseteq X$ associated to a fixed point-free involution $b$ just as in (\ref{eqn:deg-locus}) above.  And just as above, our formulas for the equivariant classes of $K$-orbit closures (which again involve only the $x$ variables) pull back to a formula for $[D_b]$ in the Chern classes of the subquotients of the flag.

We now address the case of $(SL(2n,\C), SO(2n,\C))$.  In the even case, each $O(2n,\C)$-orbit on $GL(n,\C)/B$ associated to a fixed point-free involution splits as a union of two $SO(2n,\C)$-orbits, so that each $O(2n,\C)$-orbit \textit{closure} has two irreducible components, each the closure of a distinct $SO(2n,\C)$-orbit.  Thus a formula for the class of an $SO(2n,\C)$-orbit closure associated to a fixed point-free involution $b$ should pull back to a formula for an irreducible component of the locus $D_b$, defined as in (\ref{eqn:deg-locus}).  Note (see, e.g., Table \ref{tab:type-a-so4}) that our formulas for equivariant classes of $SO(2n,\C)$-orbit closures associated to involutions with fixed points involve the $x$-variables only, but the formulas for equivariant classes of orbit closures associated to fixed point-free involutions typically also involve the class $y_1 \hdots y_n$.  We now identify this class as pulling back to an ``Euler class" $e \in H^*(X)$ associated to our bundle with quadratic form.

The Euler class of a rank $2n$ complex vector bundle $V \rightarrow X$ with nondegenerate quadratic form is a class $e \in H^{2n}(X)$ which is uniquely defined up to sign by the following property:  If $W \rightarrow Y$ is any rank $2n$ complex vector bundle with nondegenerate quadratic form, possessing a maximal (rank $n$) isotropic subbundle $E$, and if $\rho: Y \rightarrow X$ is a map for which $W = \rho^*V$, then $\rho^*(e) = \pm c_n(E)$.  In particular, the space $BK$ carries the bundle $\caV$ (omitting the pullback notation), equipped with a ``tautological" nondegenerate quadratic form, as we have already noted, so there is an associated Euler class in $H^{2n}(BK)$.  (For the interested reader, we mention that this class is the Euler class --- in the sense of \cite[\S 9]{Milnor-Stasheff} --- of a rank $2n$ \textit{real} bundle on $BK$ whose complexification is $\caV$.  The real bundle in question is pulled back, through a homotopy equivalence $BSO(2n,\C) \rightarrow BSO(2n,\R)$, from the canonical rank $2n$ real bundle $\caV_{\R}$ on the latter classifying space.)  The Euler class of $V \rightarrow X$ is the pullback of this class in $H^{2n}(BK)$ through the classifying map.  Note that it exists even in cases where $V$ does not carry a maximal isotropic subbundle.  This class is \textit{not} a polynomial in the Chern classes of $V$.  (This could indicate that the equivariant classes of $SO(2n,\C)$-orbit closures on $G/B$ associated to fixed point-free involutions are \textit{not} expressible in the $x$-variables alone.)  These facts are explained further in \cite{Edidin-Graham-95} where, among other results, the existence of an \textit{algebraic} Euler class of a Zariski-locally trivial bundle with quadratic form is established.

Now, note that the class $y_1 \hdots y_n \in H_S^*(G/B)$ is (the pullback to $H_S^*(G/B)$ of) $c_n(\bigoplus_{i=1}^n \caL_{Y_i})$ in the notation of Subsection \ref{ssec:eqvt_cohom}, Proposition \ref{prop:eqvt-cohom-flag-var} (again omitting pullback notation).  The bundle $\bigoplus_{i=1}^n \caL_{Y_i}$ is a maximal isotropic subbundle of the pullback of $\caV$ to $BS$ through the projection $BS \rightarrow BK$.  Thus $y_1 \hdots y_n$, viewed as a class in $H_K^*(G/B)$, is an Euler class for $\caV$.  Pulling all the way back to $X$ through the classifying map, we see that $\phi^*(y_1 \hdots y_n)$ is an Euler class for the bundle $V \rightarrow X$.

Summarizing, our formulas for the equivariant classes of $SO(2n,\C)$-orbit closures can be interpreted as formulas for the fundamental classes of irreducible components of degeneracy loci $D_b$ ($b$ a fixed point-free involution) defined as above, expressed in the first Chern classes of the subquotients of the flag of subbundles, together with an Euler class for the bundle with quadratic form.

\appendix
\section*{Appendix:  Weak Order Graphs and Tables of Formulas in Examples}

\begin{figure}[h!]
	\caption{$(SL(3,\C),SO(3,\C))$}\label{fig:type-a-orthogonal-1}
	\centering
	\includegraphics[scale=0.5]{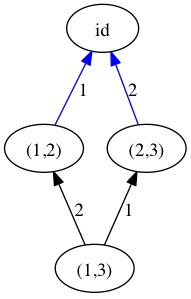}
\end{figure}

\begin{figure}[h!]
	\caption{$(SL(5,\C),SO(5,\C))$}\label{fig:type-a-orthogonal-2}
	\centering
	\includegraphics[scale=0.6]{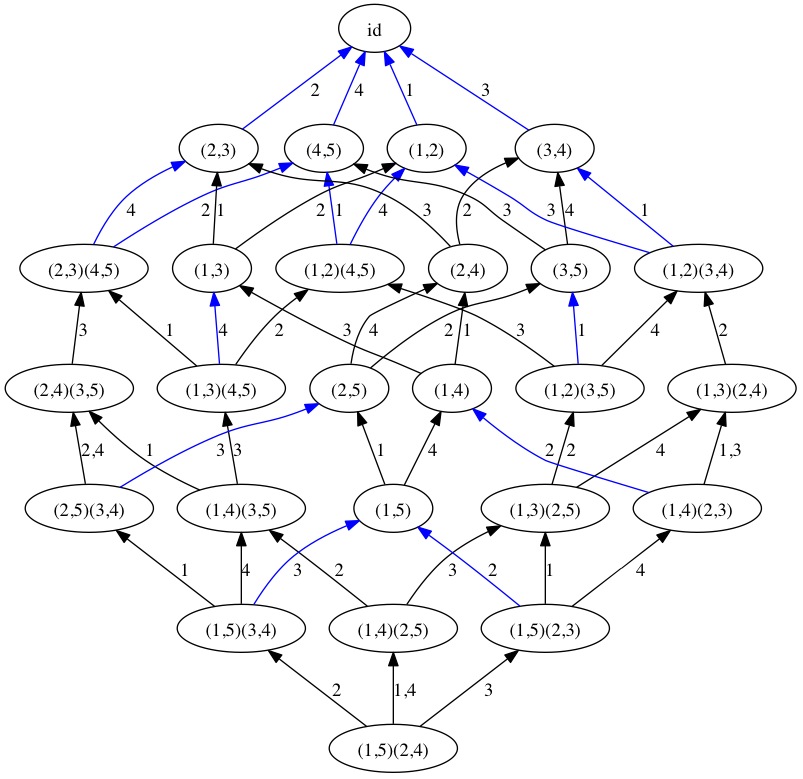}
\end{figure}

\begin{figure}[h!]
	\caption{$(GL(4,\C),O(4,\C))$}\label{fig:type-a-orthogonal-3}
	\centering
	\includegraphics[scale=0.6]{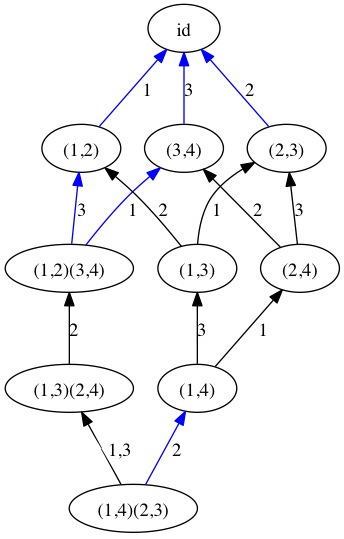}
\end{figure}

\begin{figure}[h!]
	\caption{$(SL(4,\C),SO(4,\C))$}\label{fig:type-a-orthogonal-4}
	\centering
	\includegraphics[scale=0.6]{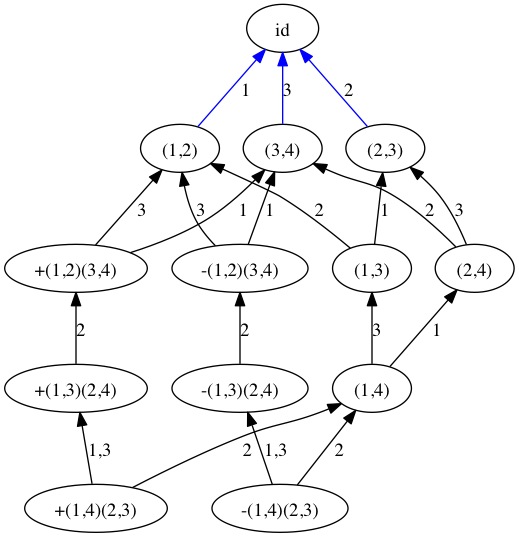}
\end{figure}

\begin{figure}[h!]
	\caption{$(SL(4,\C),Sp(4,\C))$}\label{fig:type-a-symplectic-1}
	\centering
	\includegraphics[scale=0.6]{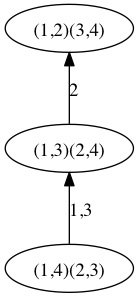}
\end{figure}

\begin{figure}[h!]
	\caption{$(SL(6,\C),Sp(6,\C))$}\label{fig:type-a-symplectic-2}
	\centering
	\includegraphics[scale=0.5]{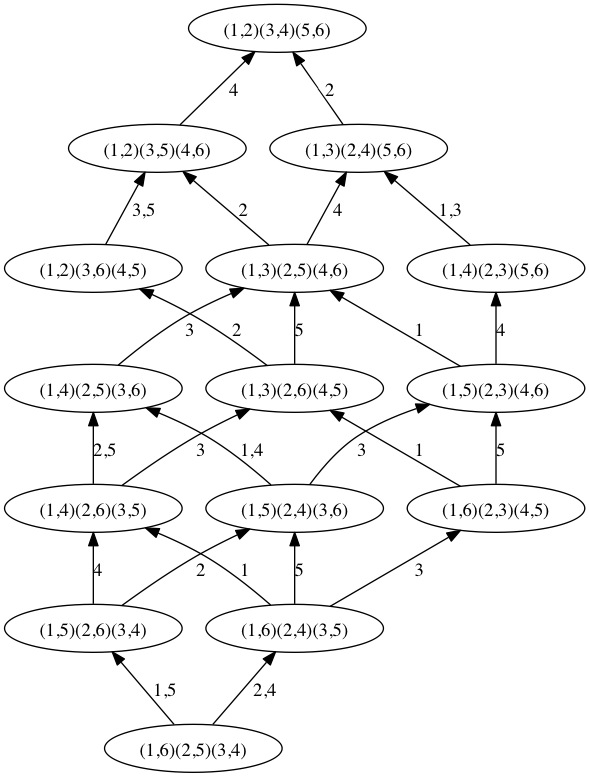}
\end{figure}

\begin{table}[h]
	\caption{Formulas for $(SL(3,\C),SO(3,\C))$}\label{tab:type-a-so3}
	\begin{tabular}{|l|l|}
		\hline
		Involution $\pi$ & Formula for $[Y_{\pi}]$ \\ \hline
		$(1,3)$ & $-2(x_1+x_2)(x_2+x_3)$ \\ \hline
		$(1,2)$ &  $-2(x_2+x_3)$ \\ \hline
		$(2,3)$ & $2(x_1+x_2)$ \\ \hline
		id & $1$ \\ 
		\hline
	\end{tabular}
\end{table}

\begin{table}[h]
	\caption{Formulas for $(SL(5,\C),SO(5,\C))$}\label{tab:type-a-so5}
	\resizebox{18cm}{7cm}{
		\begin{tabular}{|l|l|}
			\hline
			Involution $\pi$ & Formula for $[Y_{\pi}]$ \\ \hline
			$(1,5)(2,4)$ & $4(x_1+x_3)(x_3+x_5)(x_2+x_3)(x_3+x_4)(x_1+x_2)(x_1+x_4)$ \\ \hline
			$(1,5)(3,4)$ & $-4(x_1+x_2)(x_1+x_3)(x_1+x_4)(x_2+x_3)(x_2+x_3+x_4+x_5)$ \\ \hline
			$(1,4)(2,5)$ & $4(x_1+x_2)(x_1+x_3)(x_2+x_3)(x_3+x_4)(x_3+x_5)$ \\ \hline
			$(1,5)(2,3)$ & $4(x_1+x_2)(x_1+x_3)(x_1+x_4)(x_3+x_4)(x_2+x_3+x_4+x_5)$ \\ \hline
			$(2,5)(3,4)$ & $-4(x_1+x_2)(x_1+x_3)(x_2+x_3)(x_3+x_5)$ \\ \hline
			$(1,4)(3,5)$ & $-4(x_1+x_2)(x_1+x_3)(x_2+x_3)(x_2+x_3+x_4+x_5)$ \\ \hline
			$(1,5)$ & $-2(x_1+x_2)(x_1+x_3)(x_1+x_4)(x_2+x_3+x_4+x_5)$ \\ \hline
			$(1,3)(2,5)$ & $4(x_1+x_2)(x_3+x_4)(x_3^2+x_4^2 + \displaystyle\sum_{1 \leq i < j \leq 5} x_ix_j)$ \\ \hline
			$(1,4)(2,3)$ & $4(x_1+x_2)(x_1+x_3)(x_1+x_3+x_4+x_5)(x_2+x_3+x_4+x_5)$ \\ \hline
			$(2,4)(3,5)$ & $4(x_1+x_2)(x_1+x_3)(x_2+x_3)$ \\ \hline
			$(1,3)(4,5)$ & $-4(x_1+x_2)(x_1+x_2+x_3+x_4)(x_2+x_3+x_4+x_5)$ \\ \hline
			$(2,5)$ & $-2(x_1+x_2)(x_1x_2+x_1x_3+x_1x_4+x_1x_5+x_2x_3+x_2x_4+x_2x_5+x_3^2+x_3x_4+x_3x_5+x_4^2+x_4x_5)$ \\ \hline
			$(1,4)$ & $-2(x_1+x_2)(x_1+x_3)(x_2+x_3+x_4+x_5)$ \\ \hline
			$(1,2)(3,5)$ & $-4(x_2^2x_3+x_2x_3^2-x_2x_4^2-x_2x_4x_5-x_3x_4^2-x_3x_4x_5-x_4^3-x_4^2x_5 + (x_1^2+x_1x_2) \displaystyle\sum_{i=2}^5 x_i + x_1x_3 \displaystyle\sum_{i=3}^5 x_i)$ \\ \hline
			$(1,3)(2,4)$ & $4(x_1+x_2)(x_1+x_3+x_4+x_5)(x_2+x_3+x_4+x_5)$ \\ \hline
			$(2,3)(4,5)$ & $4(x_1+x_2)(x_1+x_2+x_3+x_4)$ \\ \hline
			$(1,3)$ & $-2(x_1+x_2)(x_2+x_3+x_4+x_5)$ 	\\ \hline
			$(1,2)(4,5)$ & $-4(x_1+x_2+x_3+x_4)(x_2+x_3+x_4+x_5)$ \\ \hline
			$(2,4)$ & $-2(x_1+x_2)(x_4+x_5)$ \\ \hline
			$(3,5)$ & $-2(x_4+x_5)(x_1+x_2+x_3+x_4)$ \\ \hline
			$(1,2)(3,4)$ & $4(x_4+x_5)(x_2+x_3+x_4+x_5)$ \\ \hline
			$(2,3)$ & $2(x_1+x_2)$ \\ \hline
			$(4,5)$ & $2(x_1+x_2+x_3+x_4)$ \\ \hline
			$(1,2)$ & $-2(x_2+x_3+x_4+x_5)$ \\ \hline
			$(3,4)$ & $-2(x_4+x_5)$ \\ \hline
			id & $1$ \\
			\hline
		\end{tabular}
	}
\end{table}

\begin{table}[h]
	\caption{Formulas for $(GL(4,\C),O(4,\C))$}\label{tab:type-a-o4}
	\begin{tabular}{|l|l|}
		\hline
		Involution $\pi$ & Formula for $[Y_{\pi}]$ \\ \hline
		$(1,4)(2,3)$ & $4x_1x_2(x_1+x_2)(x_1+x_3)$ \\ \hline
		$(1,3)(2,4)$ & $4x_1x_2(x_1+x_2)$ \\ \hline
		$(1,4)$ & $2x_1(x_1+x_2)(x_1+x_3)$ \\ \hline
		$(1,2)(3,4)$ & $4x_1(x_1+x_2+x_3)$ \\ \hline
		$(1,3)$ & $2x_1(x_1+x_2)$ \\ \hline
		$(2,4)$ & $2(x_1+x_2)(x_1+x_2+x_3)$ \\ \hline
		$(1,2)$ & $2x_1$ \\ \hline
		$(3,4)$ & $2(x_1+x_2+x_3)$ \\ \hline
		$(2,3)$ & $2(x_1+x_2)$ \\ \hline
		id & $1$ \\ 
		\hline
	\end{tabular}
\end{table}

\begin{table}[h]
	\caption{Formulas for $(SL(4,\C),SO(4,\C))$}\label{tab:type-a-so4}
	\begin{tabular}{|l|l|l|}
		\hline
		Parameter for $Q$ & Representative for $Q$ & Formula for $[Y]$ \\ \hline
		$+(1,4)(2,3)$ & $\left\langle e_1,e_2,e_3,e_4 \right\rangle$ & $2(x_1x_2+y_1y_2)(x_1+x_2)(x_1+x_3)$ \\ \hline
		$-(1,4)(2,3)$ &  $\left\langle e_1,e_3,e_2,e_4 \right\rangle$ & $2(x_1x_2-y_1y_2)(x_1+x_2)(x_1+x_3)$ \\ \hline
		$+(1,3)(2,4)$ & $\left\langle e_1,e_2,e_4,e_3 \right\rangle$ & $2(x_1x_2+y_1y_2)(x_1+x_2)$ \\ \hline
		$-(1,3)(2,4)$ & $\left\langle e_1,e_3,e_4,e_2 \right\rangle$ & $2(x_1x_2-y_1y_2)(x_1+x_2)$ \\ \hline
		$(1,4)$ & $\left\langle e_1,e_2+e_3,e_2-e_3,e_4 \right\rangle$ & $2x_1(x_1+x_2)(x_1+x_3)$ \\ \hline
		$+(1,2)(3,4)$ & $\left\langle e_1,e_4,e_2,e_3 \right\rangle$ & $2(y_1y_2+x_1^2+x_1x_2+x_1x_3)$ \\ \hline
		$-(1,2)(3,4)$ & $\left\langle e_1,e_4,e_3,e_2 \right\rangle$ & $-2(y_1y_2-x_1^2-x_1x_2-x_1x_3)$ \\ \hline
		$(1,3)$ & $\left\langle e_1,e_2+e_3,e_4,e_2-e_3 \right\rangle$ & $2x_1(x_1+x_2)$ \\ \hline
		$(2,4)$ & $\left\langle e_2+e_3,e_1,e_2-e_3,e_4 \right\rangle$ & $2(x_1+x_2)(x_1+x_2+x_3)$ \\ \hline
		$(1,2)$ & $\left\langle e_1,e_4,e_2+e_3,e_2-e_3 \right\rangle$ & $2x_1$ \\ \hline
		$(3,4)$ & $\left\langle e_2+e_3,e_2-e_3,e_1,e_4 \right\rangle$ & $2(x_1+x_2+x_3)$ \\ \hline
		$(2,3)$ & $\left\langle e_2+e_3,e_1,e_4,e_2-e_3 \right\rangle$ & $2(x_1+x_2)$ \\ \hline
		id & $\left\langle e_1+e_4,e_1-e_4,e_2+e_3,e_2-e_3 \right\rangle$ & $1$ \\
		\hline
	\end{tabular}
\end{table}

\begin{table}[h]
	\caption{Formulas for $(SL(4,\C),Sp(4,\C))$}\label{tab:type-a-sp4}
	\begin{tabular}{|c|l|}
		\hline
		Involution $\pi$ & Formula for $[Y_{\pi}]$ \\ \hline
		$(1,4)(2,3)$ & $(x_1+x_2)(x_1+x_3)$ \\ \hline
		$(1,3)(2,4)$ & $x_1+x_2$ \\ \hline
		$(1,2)(3,4)$ & $1$ \\
		\hline
	\end{tabular}
\end{table}

\begin{table}[h]
	\caption{Formulas for $(SL(6,\C),Sp(6,\C))$}\label{tab:type-a-sp6}
	\begin{tabular}{|c|l|}
		\hline
		Involution $\pi$ & Formula for $[Y_{\pi}]$ \\ \hline
		$(1,6)(2,5)(3,4)$ & $(x_1+x_2)(x_1+x_5)(x_1+x_3)(x_1+x_4)(x_2+x_3)(x_2+x_4)$ \\ \hline
		$(1,5)(2,6)(3,4)$ & $(x_1+x_2)(x_1+x_3)(x_1+x_4)(x_2+x_3)(x_2+x_4)$ \\ \hline
		$(1,6)(2,4)(3,5)$ & $(x_1+x_2)(x_1+x_5)(x_1+x_3)(x_1+x_4)(x_2+x_3)$ \\ \hline
		$(1,4)(2,6)(3,5)$ & $(x_1+x_2)(x_1+x_3)(x_2+x_3)(x_1+x_2+x_4+x_5)$ \\ \hline		
		$(1,5)(2,4)(3,6)$ & $(x_1+x_2)(x_1+x_3)(x_1+x_4)(x_2+x_3)$ \\ \hline
		$(1,6)(2,3)(4,5)$ & $(x_1+x_2)(x_1+x_5)(x_1+x_3)(x_1+x_4)$ \\ \hline
		$(1,4)(2,5)(3,6)$ & $(x_1+x_2)(x_1+x_3)(x_2+x_3)$ \\ \hline
		$(1,3)(2,6)(4,5)$ & $(x_1+x_2)(x_1^2+x_2^2+\displaystyle\sum_{1 \leq i < j \leq 5} x_ix_j)$ \\ \hline
		$(1,5)(2,3)(4,6)$ & $(x_1+x_2)(x_1+x_3)(x_1+x_4)$ \\ \hline		
		$(1,2)(3,6)(4,5)$ & $(x_1+x_2+x_3+x_4)(x_1+x_2+x_3+x_5)$ \\ \hline
		$(1,3)(2,5)(4,6)$ & $(x_1+x_2)(x_1+x_2+x_3+x_4)$ \\ \hline
		$(1,4)(2,3)(5,6)$ & $(x_1+x_2)(x_1+x_3)$ \\ \hline
		$(1,2)(3,5)(4,6)$ & $x_1+x_2+x_3+x_4$ \\ \hline
		$(1,3)(2,4)(5,6)$ & $x_1+x_2$ \\ \hline
		$(1,2)(3,4)(5,6)$ & $1$ \\
		\hline
	\end{tabular}
\end{table}

\bibliographystyle{alpha}
\bibliography{sourceDatabase}

\end{document}